\theoremstyle{plain}
\newtheorem{proposition}{Proposition}
\theoremstyle{remark}
\newtheorem{definition}{Definition}
\newtheorem{remark}{Remark}
\providecommand{\reals}{\mathbb{R}}
\providecommand{\NN}{\mathbb{N}}
\providecommand{\eps}{\varepsilon}
\providecommand{\Prob}{\mathcal{P}} 
\providecommand{\widebar}[1]{\overline{#1}}
\newcommand{\prob}{\mathrm{P}}
\providecommand{\ind}{\mathds{1}}
\providecommand{\Fb}{\mathbf{F}}
\providecommand{\Fbpm}{\Fb_{\pm}}
\providecommand{\Qb}{\mathbf{Q}}
\providecommand{\grx}{\mathbf{x}}
\providecommand{\gry}{\mathbf{y}}
\providecommand{\gru}{\mathbf{u}}
\providecommand{\n}{^{(n)}}
\newcommand{\diff}{\mathrm{d}}
\newcommand{\Xb}{\mathbf{X}}
\newcommand{\Yb}{\mathbf{Y}}
\newcommand{\xb}{\mathbf{x}}
\newcommand{\ub}{\mathbf{u}}
\newcommand{\Mb}{\mathbf{M}}
\newcommand{\sbold}{\mathbf{s}}
\newcommand{\Sb}{\mathbf{S}}
\newcommand{\E}{{\mathbb E}}
\newcommand{\Ell}{{\mathcal L}}
\newcommand{\Kak}{{\mathcal K}}
\numberwithin{equation}{section}
\providecommand{\ball}{\mathbb{S}}
\providecommand{\sphere}{\mathcal{S}}
\providecommand{\expec}{\mathbb{E}}
\providecommand{\normal}{\mathcal{N}}
\providecommand{\supp}{\operatorname{supp}}
\providecommand{\cov}{\operatorname{Cov}}
\definecolor{bulgarianrose}{rgb}{0.28, 0.02, 0.03}
\definecolor{darkraspberry}{rgb}{0.53, 0.15, 0.34}
\definecolor{britishracinggreen}{rgb}{0.0, 0.26, 0.15}
\definecolor{burntumber}{rgb}{0.54, 0.2, 0.14}
\definecolor{royalblue}{RGB}{0,78,156}
\begin{document}

\begin{frontmatter}

\title{Center-Outward Multiple-Output \\Lorenz Curves and Gini Indices \\ a measure transportation approach}
\runtitle{Center-outward multiple-output Lorenz curves and Gini indices}

\begin{aug}
%
%
%
\author[id=au1,addressref={add1}]{\fnms{Marc}~\snm{Hallin}\ead[label=e1]{mhallin@ulb.ac.be}}
\author[id=au2,addressref={add2}]{\fnms{Gilles}~\snm{Mordant}\ead[label=e2]{mordantgilles@gmail.com}}
\address[id=add1]{%
\orgdiv{ECARES and D{\'{e}}partement de Math\'{e}matique},
\orgname{Universit\'{e} libre de Bruxelles }
}
\address[id=add2]{%
\orgdiv{Institut f{\"u}r Mathematische Stochastik}, %
\orgname{Universit\"{a}t G\"{o}ttingen} %
}
\end{aug}

\support{The second author gratefully acknowledges
financial support from the Deutsche Forschungsgemeinschaft through CRC1456.}
%

\begin{abstract}
Based on measure transportation ideas and the related concepts of  center-outward quantile functions, 
we propose multiple-output center-outward generali\-zations of the traditional univariate concepts of Lorenz and concentration functions, and  the related Gini and Kakwani coefficients. These new concepts have a natural  interpretation, either in terms of contributions of central (``middle-class'') regions to the expectation of some variable of interest, or in terms of the physical notions of work and energy, which sheds new light on the nature of economic and social inequalities. 
Importantly, the proposed concepts pave the way to  statistically sound definitions, based on multiple variables, of quantiles and quantile regions, and the concept of  ``middle class,''   of  high relevance  
 in various socio-economic   contexts. 
\end{abstract}

\begin{keyword}
\kwd{Center-outward quantiles}
\kwd{Measure transportation}
\kwd{Concentration indices}
\kwd{Inequality measurement}
\kwd{Definition of ``middle class''}
\end{keyword}

\end{frontmatter}

\section{Introduction}
\subsection{Lorenz curves and the measurement of inequality}
Measuring inequalities is a major issue in some areas of economics and the social sciences, and a vast literature has devloped on the subject, which even led to the publication of dedicated journals such as \textit{The Journal of Economic Inequality}. The most popular statistical tools in that context are the  various indices of concentration or inequality related with the so-called {\it Lorenz} and {\it concentration functions}  \citep{Lorenz1905,kakwani1977applications}, themselves in close relation  to  integrated quantile functionals. 

The concept of quantile is clear and well understood in dimension one, and 
  Lorenz functions originally were developed for  univariate quantities (single-output case) only. 
  Due to the lack of a canonical ordering in real spaces of  dimension two and higher,  however,   multivariate  extensions  (multiple-output case) of the concept are more problematic. The few existing attempts to define Lorenz functions in a multiple-output  context came much later and are, in general, not associated with any sound concept of multivariate quantile; their interpretation, therefore, is  not entirely satisfactory, and none of  them has met a general consensus among practitioners. 
  
  Based on measure transportation ideas, new definitions of multivariate distribution and quantile functions have been proposed recently \citep{chernozhukov2017monge, del2018center} which, contrary to earlier ones, are enjoying the fundamental properties one is expecting from a quantile function; see \citet{H2021} for a survey. In this measure transportation context, convex potentials naturally extend the notion of  integrated quantile functions. 
 Building on these concepts of multivariate quantiles and convex potentials, this paper is proposing  new  multivariate extensions of the classical  concepts of Lorenz and concentration functions and the related indices---the best-known of which is the Gini coefficient \citep{gini1912variabilita}. 
 
 \subsection{Organisation of the paper}
   The rest of the paper is organized as follows. 
We briefly start (Section~\ref{histsec}) by reviewing the history of Lorenz curves and inequality measurement. We then propose (Section~\ref{sec: OTQf}) an overview of center-outward quantile functions. 
 In Section~\ref{sec: OTLK}, we   introduce,  in dimension one first, then in arbitrary dimension $d$,  the concept of {\it center-outward} Lorenz function. Starting with dimension two, indeed, the space is no longer oriented from $-\infty$ to $\infty$, but each direction yields two points at infinity, none of which qualifies as $-\infty$ nor $\infty$. This leads to center-outward rather than ``left-to-right'' orderings. The resulting multivariate indices (Gini, Kakwani) are introduced in Section~\ref{SecGini}.  
 Section~\ref{sec: Est} proposes estimators of  the newly defined population quantities  and establishes their consistency properties. We conclude in Section~\ref{sec: App} with an application to a dataset relating income, life expectancy, and education. This application nicely illustrates how the  proposed concepts yield new quantitative descriptions of multivariate inequalities and concentrations.  
 
\section{A brief history of Lorenz curves and Gini coefficients}
\label{histsec} 

We start with  a brief review of some classical definitions and concepts   arising in  connection with  Lorenz curves  and the associated indices.  We then propose a general definition that encompasses most previous ones. Due to space considerations, this survey unavoidably is  incomplete; we refer to \citet{arnold2018majorization} for a book-long exposition. 
\subsection{Traditional single-output concepts}

The traditional definition of a Lorenz curve deals with a univariate real-valued  positive absolutely continuous random variable~$X$ with Lebesgue density  
 $f_X(x)>0$ for all $x\geq 0$.
  Denoting by $F_X$ its distribution function, $F_X(0)=0$ and~$F_X$ is strictly increasing, so that   the quantile function~$Q_X\coloneqq~\!F_X^{-1}$ is well defined. Assume that the mean
\[
0<\mu_X\coloneqq \mathbb{E} X\coloneqq\int_{-\infty}^\infty xf_X(x)\diff x=  \int_0^1 Q_X(t)\diff t  
\]
is finite: the classical definition of a Lorenz curve then is as follows \citep{Lorenz1905,gastwirth1971general}. 
\begin{definition}\label{Lor1}{\rm 
 The {\em (relative) Lorenz curve of }$X$ is   the graph~$\left\{(u, \Ell_X(u)),\, u\in [0,1]
 \right\}$ of the {\em (relative) Lorenz function}  
\begin{align}
\label{eq: altDefLC}
u\mapsto \Ell_X(u)&\coloneqq  \frac{\int_0^u Q_X(t)\diff t }{ \int_0^1 Q_X(t)\diff t} =\mu_X^{-1} \mathbb{E}\left[X \ind_{\{X\le Q_X(u)\}} \right]\nonumber \\ 
&= \mu_X^{-1}\left( \psi_X(u)-\psi_X(0)\right) , \quad 0\le u\le1
\end{align}
where $\psi_X: (0,1)\to\mathbb R$ is a primitive of $Q_X$, that is,  $Q_X(u)=\dfrac{{\rm d}\psi_X }{{\rm d} u}(u)$ for~$0<~\!u<~\!1$, with~$\psi_X (1)-\psi_X (0)
=\mu_X$. }
\end{definition}

It immediately follows from the assumptions that $\Ell_X$  is continuous, strictly increasing, and a.e.\ differentiable, with $0=\Ell_X(0)<\Ell_X(1)=1$; $\psi_X$ is a.e.\ differen\-tiable and convex, with derivative $Q_X$. Without any  loss of generality, let us impo\-se~$\psi_X(0)=0$: then,
\begin{equation}\label{potdefLC} \mu_X = \psi_X(1)\quad\text{and}\quad 
\Ell_X(u)=  \psi_X(u)/\psi_X(1), \quad 0\le u\le1.
\end{equation}

The numerator $u\mapsto L_X(u)\coloneqq \int_0^u Q_X(t)\diff t $ in \eqref{eq: altDefLC} and \eqref{potdefLC}---call it the {\it absolute Lorenz function}---represents the contribution to the inte\-gral~$\int_0^1 Q_X(t)\diff t=\mu_X$ of the quantile region $(-\infty, u]$, which provides the intuitive interpretation of the concept. This absolute definition  still makes sense 
  if the requirement of a positive $X$ is dropped. 
 
 In the sequel, the positivity and finite-mean assumptions are implicitly made whenever  relative Lorenz functions are mentioned and implicitly relaxed when absolute Lorenz functions are involved.

Associated with the relative Lorenz function  are several {\it concentration indices},  the most popular of which  is the Gini coefficient   \citep{gini1912variabilita},  defined as twice  the area comprised between the (relative) Lorenz curve and  the main diagonal of the unit square, namely, 
\begin{equation}
\label{eq: Gini}
G_X\coloneqq  2\int_0^1 \left[u-\Ell_X(u)\right] {\rm d}u.
\end{equation}
Other indices are the {\em Pietra  index} (\cite{pietra1915delle}; see \cite{pietra2014}) 
\[ 
P_X\coloneqq  \sup_{0\le u \le 1} [u- \Ell_X(u)]
\]
and the  {\em Amato-Kakwani index}, defined  as the rescaled length \[
AK_X\coloneqq  \frac{l_X -\sqrt{2}}{2 -\sqrt{2}}
\]
of the relative  Lorenz curve, 
where (denoting by $\Ell^\prime_X$ the a.e.\ derivative of the Lorenz function~$u\mapsto \Ell_X(u)$)
\[
l_X \coloneqq  \int_0^1 \sqrt{1+ (\Ell_X^\prime(u))^2} \,\diff u .
\]
These three indices all take values in $[0,1]$. They can be interpreted as a measure of discrepancy between two  {distribution functions    over $[0,1]$:} the uniform, with probability density~$f(u)=1$ 
  and the one 
   with 
 density $f(u)=Q_X(u)/\mu_X$,  $u\in[0,1]$. 

Other characterizations of the Gini index are possible. For instance, denoting by  $X_1$ and~$X_2$ two independent copies of $X$, it can be shown that \citep[p.~384]{yitzhaki1991concentration} 
 \begin{align}
\label{eq: GiniAlt}
G_X = \frac{\mathbb{E} \lvert X_1 -X_2\rvert}{2\mu_X} 
= \frac{1}{2\mu_X} \mathbb{E} \left\lvert \operatorname{det} \begin{pmatrix} 1 & X_1 \\ 1 & X_2 \end{pmatrix} \right \rvert
= \frac{2}{\mu_X} \operatorname{Cov}(X, F(X)),
\end{align}
a definition that does not explicitly involve the quantile function $Q_X$.

Denoting by $g: x\mapsto g(x)$ a continuous and positive    real-valued function with finite   mean~$\mu_g>0$, the following extension of the Lorenz function was proposed by \citet{kakwani1977applications} and is often considered  in economics.
\begin{definition}\label{Lor2}{\rm 
The {\it (relative) concentration function of} $g(X)$ with respect to $X$  is  
\begin{equation}\label{concfctdef}
u\mapsto  {\Kak}^g_X(u) \coloneqq \frac{ \int_0^u g(Q_X(t)) \diff t}{ \int_0^1 g(Q_X(t)) \diff t}  
=\mu_g^{-1} \mathbb{E}\left[g(X) \ind_{\{X\le Q_X(u)\}} \right] \quad u\in [0,1].
\end{equation}
}
\end{definition}
For  positive   and stricly increasing $g$, $\Kak^g_X(u)$ coincides with $\Ell_{g(X)}(u)$; in particular,~$g(x)=x$ yields  $\Kak^g_X=\Ell_X$. But $g$ needs not be positive and monotone, nor have finite mean: the numerator $K^g_X(u)\coloneqq\int_0^u g(Q_X(t)) \diff t$ in \eqref{concfctdef}---call it the {\it absolute concentration function of} $g(X)$ then still makes sense.  

A case that  has  attracted particular  interest in economics is  $g(x) \coloneqq \expec [Y \vert X=x]$ where~$Y$ is some variable of interest: $\Kak^g_X$ then yields  
 \begin{align}\nonumber
 u\mapsto \Kak_{Y/X}(u) &\coloneqq \frac{ \int_0^u {\rm E}[Y\vert X=Q_X(t)] \diff t}{ \int_0^1 {\rm E}[Y\vert X=Q_X(t)] \diff t}=\mu_Y^{-1} \int_0^u {\rm E}[Y\vert X=Q_X(t)]\diff t \\
 &= \mu_Y^{-1} {\rm E}\left[Y\ind_{\{X\le Q_X(u)\}} \right] =\mu_Y^{-1}\left(\psi_{Y/X}(u) -\psi_{Y/X}(0)\right)\quad u\in [0,1]
\label{KXYdef} \end{align}
where $\psi_{Y/X}$ denotes an arbitrary primitive of $u\mapsto {\rm E}[Y\vert X=Q_X(u)]$ (the value of which we can  assume to be zero at $u=0$). 

For any $g$, there exist  infinitely many variables $Y$ such that  $g(x) = \expec [Y \vert X=x]$; rather than a {\it particular case} of \eqref {concfctdef}, \eqref{KXYdef} thus should be considered as an alternative but equivalent expression involving the joint distribution of a couple of random variables $(X,Y)$. We emphasize this by calling $\Kak_{Y/X}$ and its numerator   
\[
u\mapsto K_{Y/X}(u)\coloneqq \int_0^u {\rm E}[Y\vert X=Q_X(t)] \diff t
\]
 the {\it relative} and {\it absolute  Kakwani functions}, respectively,  {\it of $Y$ with respect to $X$}.

Whenever relative Kakwani functions are mentioned or the notation $\mu_Y$ is used, we tacitly assume that $0<\mu_Y<\infty$.

\subsection{Generalized Lorenz functions}

The value at $u$ of the relative Kakwani function of  $Y$ with respect to $X$ represents the contribution of the quantile region~$(-\infty , Q_X(u)]$ of $X$ to the mean $\mu_Y$ of $Y$, thus splitting the roles:   $Y$ is the variable of interest, but $X$ determines the quantile regions involved in the definition of~$\Kak_{Y/X}(u)$. 

The same role-splitting can be performed in the definition of Lorenz functions, yielding the {\it relative} and {\it absolute  Lorenz functions}   {\it of $Y$ with respect to $X$}. More precisely, 
 using obvious notation 
 $F_{X}$, $F_{Y}$, $Q_{X}$, $Q_{Y}$, $\psi_X$, and $\psi_Y$ for the couple~$(X,Y)$ of random variables, define the {\it relative Lorenz function of $Y$ with respect to $X$} as
\begin{align}\nonumber 
u\mapsto \Ell_{Y/X}(u) &\coloneqq \frac{\int_0^{F_Y\circ Q_X(u)}Q_Y(t)\diff t}{\int_0^1Q_Y(t)\diff t} =\mu_Y^{-1} {\rm E}\left[Y\ind_{\{Y\leq Q_X(u)\}}
\right] \\ 
&= \mu_Y^{-1}\left(\psi_Y(F_Y\circ Q_X(u))
 - \psi(0)\right) \quad u\in [0,1],
\label{LXYdef}\end{align}
reducing to $\Ell_{Y/X}(u) =\psi_Y(F_Y\circ Q_X(u))/\psi_Y(1)$ if we assume, without loss of generality, that $\psi_Y(0)=0$. The numerator $L_{Y/X}$ of \eqref{LXYdef} is the {\it absolute Lorenz function of $Y$ with respect to $X$}. To the best of our knowledge, such concept has never been considered in the literature.

Both $\Kak_{Y/X}$ and $\Ell_{Y/X}$ extend the traditional Lorenz function $\Ell_{X}$, to which they reduce for $Y=X$ and $Y\stackrel{d}{=}X$ (with $\stackrel{d}{=}$ standing for equality in distribution), respectively. While~$\Kak_{Y/X}(u)$ evaluates the contribution to $\mu_Y$ of  the region~$(-\infty, Q_X(u)]\times {\mathbb R}$ of the~$(x,y)$ space (i.e., the quantile region of order $u$ of $X$), $\Ell_{Y/X}(u)$  
is about the contribution of~${\mathbb R}\times(-\infty, Q_X(u)] $  (the quantile region of order $F_Y\circ Q_X(u)$ of $Y$).  An important difference between them  
  is that $\Kak_{Y/X}$ depends on the joint distribution~${\rm P}_{XY}$ of $X$ and $Y$, while~$\Ell_{Y/X}$ only involves their marginals ${\rm P}_X$ and~${\rm P}_{Y}$. As a consequence,  $\Kak_{Y/X}$, contrary to ~$\Ell_{Y/X}$, cannot be expressed in terms of the marginal integrated quantile functions $\psi_X$ and $\psi_Y$. On the other hand, $\Ell_{Y/X}$ only makes sense if $X$ and $Y$ are comparable quantities, which is the case when $Y=X$, or when ${\rm P}_X$ and ${\rm P}_Y$ are, for instance, the income distributions within two given socio-economic groups or two countries.

\subsection{The multivariate case: classical extensions} Extending the above concepts to multivariate random variables $\Xb$ and $\Yb$ is, of course, highly desirable. Several attempts have been made to extend the definitions of the Lorenz function \eqref{eq: altDefLC} and the Gini index \eqref{concfctdef} to a multivariate setting. The difficulty, of course, lies in the fact that, in dimension~2 and higher, the real space is no longer canonically ordered: as a consequence, the very concept of quantile 
is problematic. Quoting  \citet[page~149]{arnold2018majorization}, 
\begin{quote}Extensions of the Lorenz curve concept to higher dimensions was long frustrated by the fact that the usual definitions of the Lorenz curve involved either order statistics or a quantile function of the corresponding distribution, neither of which has a simple multidimensional analog. 
\end{quote} An extension of $\Ell_{Y/X}$ and $\Kak_{Y/X}$ to couples of random vectors $({\bf X}, {\bf Y})$  looks even more challenging.

Therefore, rather than generalizing the classical quantile-based definition~\eqref{eq: altDefLC}, the   extensions found in the literature essentially consist in bypassing the role of quantiles in these definitions; as a result, the  intuitive interpretation of the univariate definitions sometimes gets lost, and the value of the resulting multivariate concept is disputable. We only briefly review some of them.

\citet[page 150]{arnold2018majorization}, for instance, propose,   for a positive bivariate random vector~$\Xb =(X_1,X_2)^\prime$  with density $f(x_1,x_2)$ and marginal  
 quantile functions 
 $Q_1$
  and~$Q_2$,   the bivariate relative Lorenz function 
\begin{equation}
\label{eq: ArnoldBiv} (u,v)\in[0,1]^2\!\mapsto\! \Ell_{X_1,X_2}(u,v)\!\coloneqq\!   \frac{1}{\expec (X_1X_2)} \int_0^{Q_1(u)}\!\! \int_0^{Q_2(v)}\hspace{-2mm} x_1x_2f(x_1,x_2)\, \diff x_1 \diff x_2
\end{equation}
and the corresponding {\it Lorenz surface} $\{(u, v,  \Ell_{X_1,X_2}(u,v)),\,  (u,v)\in[0,1]^2 \}$. This indeed constitutes a  technically correct bivariate extension of Definition~\ref{Lor1} and straightforwardly  extends to higher dimensions. However,    the focus 
 on the product~$X_1X_2$ is somewhat ad hoc;  the resulting order  has not been fully studied.

Starting from Equation~\eqref{eq: GiniAlt}, which characterizes the Gini index without resorting to quantiles, other authors have proposed  ingenious multivariate extensions of the Gini index skipping the definition of a Lorenz function. 
Denoting by~$\Xb, \Xb_1, \ldots, \Xb_d $ a~$(d+1)$-tuple of independent copies of a $d$-dimensional 
variable~$\Xb$ and  
 considering the $(d+1)\times (d+1)$ matrix 
\[
\Mb_{\Xb} \coloneqq  \begin{pmatrix}  1\ &1 & \ldots & 1 \\
\Xb\ & \Xb_1 & \ldots & \Xb_d
\end{pmatrix}^\prime, 
\]
\citet{koshevoy1997multivariate} define the {\it Volume Gini Index of} $\Xb$  as 
\begin{equation}
\label{eq: GiniMultKM}
V\!G_\Xb\coloneqq \frac{1}{(d+1)!}\mathbb{E}\left\lvert \operatorname{det} \Mb_{\Xb} \right\rvert.
\end{equation}
This idea of measuring concentration based on  volumes of  convex bodies actually goes back to  \citet{wilks1960multidimensional}, who proposed  a coefficient that coincides with~$V\!G_\Xb$ up to a scale  factor. 
\citet{oja1983descriptive} pointed out that this coefficient  is the expected vo\-lume of the simplex with  vertices   the random $(d+1)$-tuple $\Xb, \Xb_1, \ldots, \Xb_d $; 
Koshevoy and Mosler showed that the definition~\eqref{eq: GiniMultKM} corresponds to the volume of a {\it lift zonoid} \citep{mosler2002multivariate}. While reducing, in dimension one, to the traditional concept, the Volume Gini index, however, fails, in dimension two and higher, to enjoy the   intuitive interpretation of the traditional Gini concentration index.  

Concepts of multivariate inequality/concentration and their measurement still constitute an active area of research; see the recent works by \citet{Mosler2022, grothe21,andreoli2020unidimensional,sarabia2020lorenz,Medeiros2015}, and the references therein. 
The latest contribution by \citet{Mosler2022}  explores the possibility of aggregating the various attributes of welfare; a compelling approach that differs from but complements ours.  The idea of aggregating data is also considered in \citet{sanchez2020multivariate} to construct a notion of middle-class that relies on multiple attributes. 
By the time we were completing  this manuscript, the work by \citet{fan2022lorenz} came to our attention. The multivariate Lorenz curves concepts they are proposing also are based on measure transportation ideas, and are aiming at the same objectives as ours. However, their concepts of multivariate quantiles are not of the center-outward type and do not allow for a clear definition of quantile regions;    the interpretation of their Lorenz curves, therefore, is essentially different from ours. 
Moreover, they do not consider the notions of concentration and Lorenz curves of one variable with respect to another one.


\section{Center-outward distribution and quantile functions}
\label{sec: OTQf}
We briefly present here the concept of center-outward quantile underlying the definition our multiple-output Lorenz curves and Gini indices. 

\subsection{Quantiles and center-outward quantiles}Based on measure transportation ideas, new concepts of multivariate quantile  have been proposed recently \citep{chernozhukov2017monge,del2018center,H2021} under the name of {\it Monge--Kantorovich } and {\it center-outward quantiles}, respectively. Contrary to previous proposals,   these center-outward quantiles enjoy the essential properties one is expecting from quantiles. In particular,  
the  resulting quantile regions  are closed, connected, and nested, with   probability contents that do not  depend on the underlying distributions (which is not the case, e.g., with  depth regions).  A distinctive feature of the real space in dimension~$d>1$ is the nonexistence of a linear, left-to-right  ordering from $+\infty$ to $-\infty$: instead, each direction $\bf u \in {\mathcal S}_{d-1}$ defines  an arbitrarily remote point, suggesting   {\it center-outward} partial orderings rather than a unique complete linear ordering from $-\infty$ to $\infty$.  Familiar examples of center-outward orderings  are the  orderings associated with the geometry of elliptical distributions or  the various concepts of depth---see, e.g., \cite{ SerfZuo00, Serf02, Serf19},  \citet{ Zuo21}, or  \citet{DPK22}. 

The orderings associated with the measure-transportation-based concepts  of distribution and quantile functions developed in \citet{del2018center} are of a center-outward nature comparable to  measures of outlyingness related to these depth notions, and offer  appealing multivariate  center-outward generalizations of ranks, signs, and quantile functions. 
Center-outward quantile functions, in that context, are defined as gradients of a {\it convex potential}~$\psi$---a natural form of integrated quantile function. As we shall see, when substituted  for traditional integrated univariate quantiles in~\eqref{eq: altDefLC}, \eqref{KXYdef}, and~\eqref{LXYdef}, that potential $\psi$   yields  extensions of the univariate concepts of Lorenz and Kakwani functions and the related indices that preserve the intuitive interpretation of the univariate concepts.


\subsection{Univariate center-outward distribution and quantile functions} 

Before addressing the general case, let us first examine the consequences, on the familiar  univariate concepts, of a center-outward definition of quantile regions. Let the univariate random variable $X$~have Lebesgue-absolutely continuous distribution~${\rm P}_X$ with continuous distribution function $F_X$, and call~$F_{X\pm} \coloneqq 2F_X -1$ the {\it center-outward distribution function of $X$}. That center-outward distribution function~$F_{X\pm}$ is such that
\[
\vert F_{X\pm}(x)
\vert= \prob[x^\prime\leq X\leq x^{\prime\prime}]
\]
 with  $x^\prime$, $x^{\prime\prime}$ characterized by 
 \[
F_X(x^{\prime})=\min(F_X(x), 1-F_X(x))\quad\text{ and}\quad F_X(x^{\prime\prime})=\max(F_X(x), 1-F_X(x)).
\]
    While the traditional condition~$F_X(x)\leq\tau$ characterizes  {\it quantile regions} with probability~$\tau$ as nested halflines of the form  $(-\infty, F_X^{-1}(\tau)]$, the condition~$\vert F_{X\pm} (x)\vert\leq\tau$ characterizes  {\it quantile regions} with probability~$\tau$ as nested central interquantile  intervals $[x', x'']$ of the form 
 \begin{equation}
 \label{ctauuni}
{\mathbb C}_X(\tau )\coloneqq [ F_{X\pm}^{-1} (-\tau),  F_{X\pm}^{-1} (\tau)]=[F_X^{-1}((1-\tau)/2), F_X^{-1}((1+\tau)/2)].
 \end{equation}
The intuitive interpretation of these {\it center-outward quantile regions} is clear: while the traditional quantile region of order $\tau$ contains the   proportion $\tau$ of the smallest  values of the random variable $X\sim{\rm P}_X$, ${\mathbb C}_X(\tau )$ contains the proportion $\tau$ of its ``most central'' values.\footnote{In dimension $d=1$, it is easy to see that  the notions of ``most central'' and ``Tukey-deepest''  coincide, so that these center-outward quantile regions also coincide with the Tukey depth regions---an equivalence that no longer holds in dimensions $d\geq 2$, though.}

 Let $\rm P$ be Lebesgue-absolutely continuous. A celebrated  theorem by \citet{McCann95} tells us  that there exists a unique  {\it potential} $\psi^*_\pm$  such that $\psi_\pm^*(0) = 0$ with gradient $\nabla \psi_\pm^*$    {\it pushing~$\rm P$ forward\,}\footnote{We adopt this convenient terminology and the corresponding notation  from measure transportation, where a mapping $G$ is {\it pushing} a distribution $\rm P _1$ {\it forward} to a distribution  $\rm P _2$ (nota\-tion:~$G\#{\rm P _1}=\rm P _2$) if $Z\sim \rm P _1$ entails~$G(Z)\sim \rm P _2$.} to the uniform ${\rm U}_1$  over the unit ball~${\mathbb S}_1=~\!(-1,1)$ of~$\mathbb R$---namely, such that~$\nabla \psi_\pm^*(Z)\sim{\rm U}_1$ for any~$Z\sim \rm P$, which we write  as $\nabla \psi_\pm^*\#\rm P=~\!\rm U _d$.  
 
 Since 
 $F_{X\pm}$ is monotone increasing (hence the derivative of a convex function) and such that 
  $F_{X\pm}(X)\sim {\rm U}_1$ (hence pushes the distribution $\rm P _X$ of~$X$ forward to~${\rm U}_1$), McCann's theorem   entails~$F_{X\pm}=\nabla \psi_{X\pm}^*$ Lebesgue-a.e. Assuming that~$X$ has an everywhere positive density, the inverse $Q_{X\pm}$ of $F_{X\pm}$ is well defined and is pushing~${\rm U}_1$ forward to $\rm P _X$: call it the {\it center-outward quantile function of~$X$}.     McCann's theorem again implies that, for some unique convex potential~$\psi_{X\pm}$ such that~$\psi_{X\pm}(0)=0$, the center-outward quantile function~$Q_{X\pm} (u)\eqqcolon \nabla\psi_{X\pm} (u) = {\rm d}\psi_{X\pm} (u)/{\rm d}u$;    elementary al\-gebra yields that $\psi_{X\pm} (u)$ equals $2\psi_X((u+1)/2)$ where $\psi_X$, defined in Section~\ref{histsec}, is a primitive of~$Q_X$. 
  Up to this point, $\psi_{X\pm}$ is defined on $(-1,1)$ only; this domain is classically extended to the real line by letting~$\psi_{X\pm}( 1)\coloneqq\lim_{u\uparrow   1}\psi_{X\pm}(u)$, $\psi_{X\pm}(-1)\coloneqq\lim_{u\downarrow   -1}\psi_{X\pm}(u)$, and~$\psi_{X\pm}(u)\coloneqq\infty$ for $\vert  u\vert >1$. 
   
\subsection{Multivariate center-outward   quantiles and transportation to the unit ball
} 
Since a one-to-one correspondence between the traditional distribution function~$F_X$ and the center-outward $F_{X\pm}$, hence between $Q_X$ and~$Q_{X\pm}$, exists, 
 the concepts $F_{X\pm}$ and~$Q_{X\pm}$ of center-outward distribution and quantile functions are of limited interest in dimension~$d=1$: their major advantage is that their definitions   and their interpretation, unlike those of~$F_X$ and~$Q_X$,   readily extend to dimensions $d\geq 2$, for which  no sound counterparts to $F_X$ and $Q_X$ are available. 

Let $\Xb$ be a $d$-dimensional random vector with distribution $\rm P_\Xb$ in the class $\mathcal P _d$ of Lebesgue-absolutely continuous distributions over $\mathbb R ^d$. In view of the aforementioned theorem by McCann, a center-outward distribution function $\Fb_{\Xb\pm}$ extending the univariate one~$F_{X\pm}$ can be defined as follows. Denote by $\mathbb S _d$ and $\mathcal S _{d-1}$ the open unit ball in $\mathbb R ^d$ and   the unit hypersphere centered at the origin, respectively, by~${\rm U}_d$  the spherical uniform over the ball $\mathbb S _d$,  and by  ${\rm V}_{d-1}$
the uniform over $\mathcal S _{d-1}$.  For $d=1$, the spherical  uniform $\rm U_d=\rm U_1$ reduces to the Lebesgue uniform over the unit ball~$\mathbb S_1=(-1,1)$. For $d\geq 2$, however,  
   $\rm U _d$  is the product of the uniform $\rm U_{[0,1]}$ over $[0,1]$ (for the distances to the origin) and the uniform~${\rm V}_{d-1}$ over  $\mathcal S_{d-1}$ (for the directions), which no longer coincides  with the Lebesgue uniform.

 \begin{definition}{\rm 
 \label{definition: dfqf}
  Call {\it center-outward distribution function of} $\Xb$ the a.e.\ unique gradient of convex function~$\nabla\psi_{\Xb\pm}^*=:\Fb_{\Xb\pm}$ pushing $\rm P_\Xb$ to~$\rm U_d$, that  is, such that~$\Fb_{\Xb\pm}\#\rm P_\Xb=\rm U_d$. }
 \end{definition}
This is the definition  proposed in \citet{del2018center}, 
where $\Fb_{\Xb\pm}$ is shown to enjoy all the properties expected from a distribution function. In particular, for a measure $\rm P_\Xb$ in  the class~{$\mathcal P _d^\pm\subset \mathcal P _d$} of all absolutely continuous distributions   with   density $f$   such that for all~$c>0$ there exists 
 $0\leq\lambda_c\leq\Lambda_c<\infty$ for which  
 $$\lambda_c\leq\inf_{\xb\in c\mathbb S _d} f(\xb)
\leq\sup_{\xb\in c\mathbb S _d}f(\xb)\leq~\!\Lambda_c,$$
 the center-outward distribution function~$\Fb_{\Xb\pm}$ is a homeomorphism between the punctured unit ball~$\mathbb S_d\setminus~\!\{{\boldsymbol 0}\}$ and~$\mathbb R ^d\setminus \Fb_\pm^{-1}({\boldsymbol 0})$ \citep{Fig18}; the condition $\rm P\in\mathcal P _d^\pm$ is partially relaxed in \citet{delB20} and \citet{del2018center}, where convex supports are allowed. Then, the center-outward distribution function $\Fb_{\Xb\pm}$ is continuously invertible except, possibly, at the origin: denote by $\Qb_{\Xb\pm}$ its inverse. This inverse  is the $\rm U _d$-a.s.\ unique gradient~$\nabla\psi_{\Xb\pm}$ of a convex potential function from $\mathbb S _d$ to $\mathbb R ^d$ pushing~$\rm U _d$ forward to $\rm P_\Xb$, and naturally qualifies as the center-outward quantile function of~$\Xb$ (see \citet{del2018center} for details). 

The function $\psi_{\Xb\pm}$, which so far is defined on $\mathbb S _d$ only, is extended to the closed ball~$\overline{\mathbb S}_d$ by  lower semi-continuity: $\psi_{\Xb\pm}(\mathbf{u}):=\liminf_{\mathbf{z}\to\mathbf{u},|\psi_{\Xb\pm} (\mathbf{z})|<1} \psi_{\Xb\pm} (\mathbf{z})$ for~$|\mathbf{u}|=~\!1$ and to $\mathbb R ^d$ by setting~$\psi_{\Xb\pm}(\mathbf{u}):=+\infty$ for $\mathbf{u}\notin \overline{\mathbb{S}}_d$  (see, e.g. (A.18) in~\cite{Fig18}). With this extension,  the potentials   $\psi_{\Xb\pm}$ and  $\psi_{\Xb\pm}^*$ are   Fenchel--Legendre conjugates, i.e., satisfy 
\[
\psi_{\Xb\pm}^*(\grx) \coloneqq  \sup_{\gru\in \ball_d}\left\{ \gru\cdot \grx - \psi_{\Xb\pm}(\gru) \right\},
\]
(throughout, we use the dot product notation $\gru\cdot \gry$     for the scalar product of $\gru$ and~$\grx$).

Finally,  the potentials $\psi_{\Xb\pm}$ and $\psi_{\Xb\pm}^*$ are uniquely determined if, without any loss of generality, we impose $\psi_{\Xb\pm}({\boldsymbol 0}) = 0 $.   \citet{Fig18} also shows that for spherically symmetric distributions, i.e., for distributions with densities  $f_\Xb(\xb)$ of the form $f_{\text{\rm rad}}(\Vert\xb -\boldsymbol\mu_\Xb\Vert)$ for some $\boldsymbol\mu_\Xb$,    the center-outward quantile function has the particular form 
\[
  \Qb_{\Xb\pm}(\ub) = \boldsymbol\mu_\Xb + q(\lVert \ub\rVert) \frac{\ub}{\lVert \ub\rVert},\quad \ub\in\ball_d
\]
where $q$ is the traditional univariate quantile function of $\Vert\Xb - \boldsymbol\mu\Vert$.  The quantile region of order $\tau$ of ${\rm U}_d$, thus, is the closed ball with radius $\tau$ centered at the origin.

Associated with $ \Fb_{\Xb\pm}$ and $ \Qb_{\Xb\pm}$ are the {\it quantile regions} ${\mathbb C}_\Xb(\tau)\coloneqq\Qb_{\Xb\pm} (\tau{\mathbb S}_d)$ and {\it contours} ${\mathcal C}_\Xb(\tau)\coloneqq\Qb_{\Xb\pm} (\tau{\mathcal S}_{d-1})$, $\tau\in(0,1)$;  the set  ${\mathcal C}_\Xb(0)\coloneqq\cap_{\tau\in(0,1)}{\mathbb C}_\Xb(\tau)$, which \citep{Fig18} is convex, compact, and has Lebesgue measure zero, can be considered as a {\it multivariate median}.

The intuition behind this definition of  the center-outward quantile regions  is the same as in the univariate case \eqref{ctauuni}: ${\mathbb C}_\Xb(\tau)$ contains the proportion $\tau$ of the ``most central''   values  of~$\Xb\sim {\rm P}_\Xb$. That intuition is illustrated, for $d=2$,  in Figure~\ref{Fig0}.

\begin{figure}[h!]
\includegraphics[trim=12mm 15mm 10mm 15mm, clip,width=6cm, height=6cm]{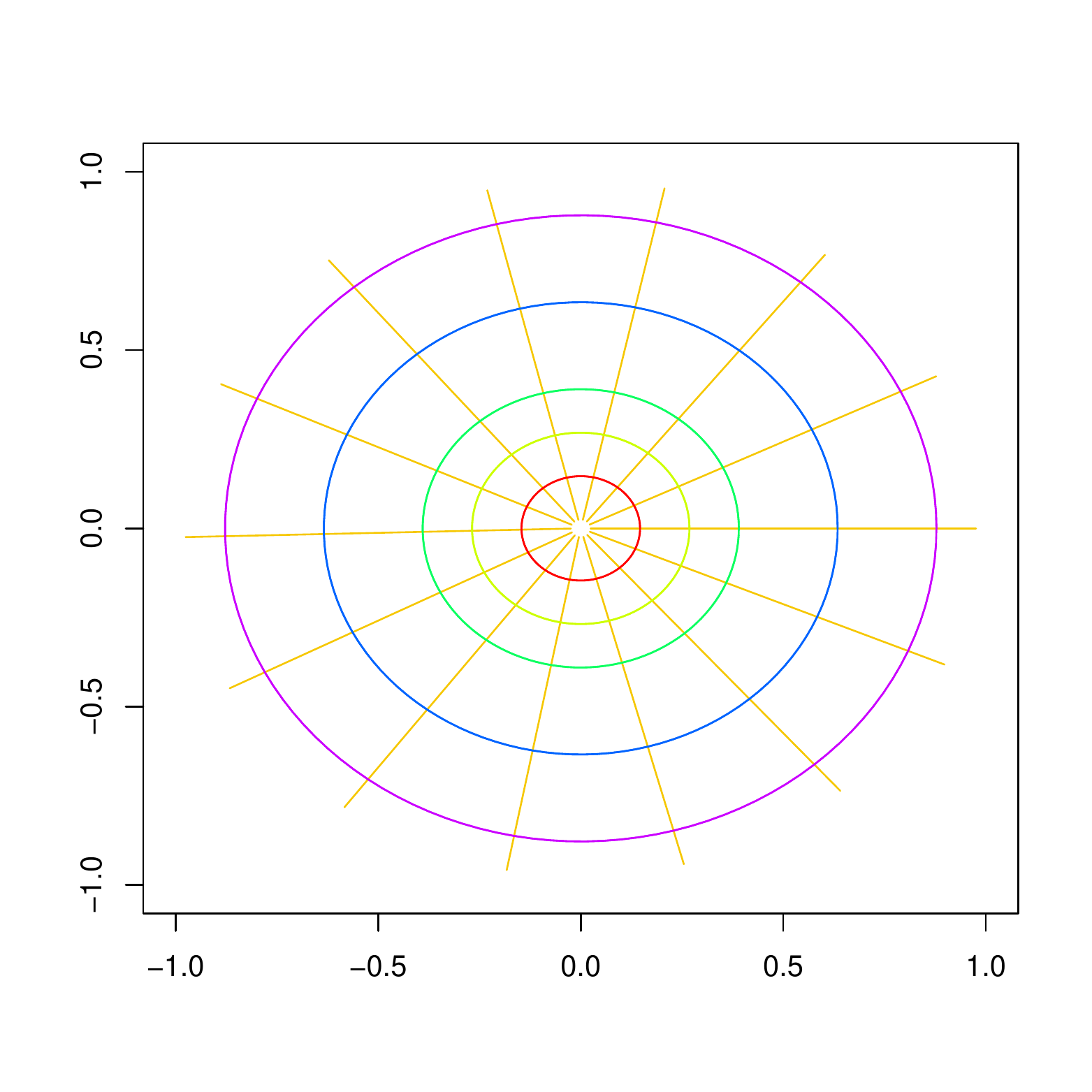} \ 
\includegraphics[trim=12mm 15mm 10mm 15mm, clip,width=6cm, height=6cm]{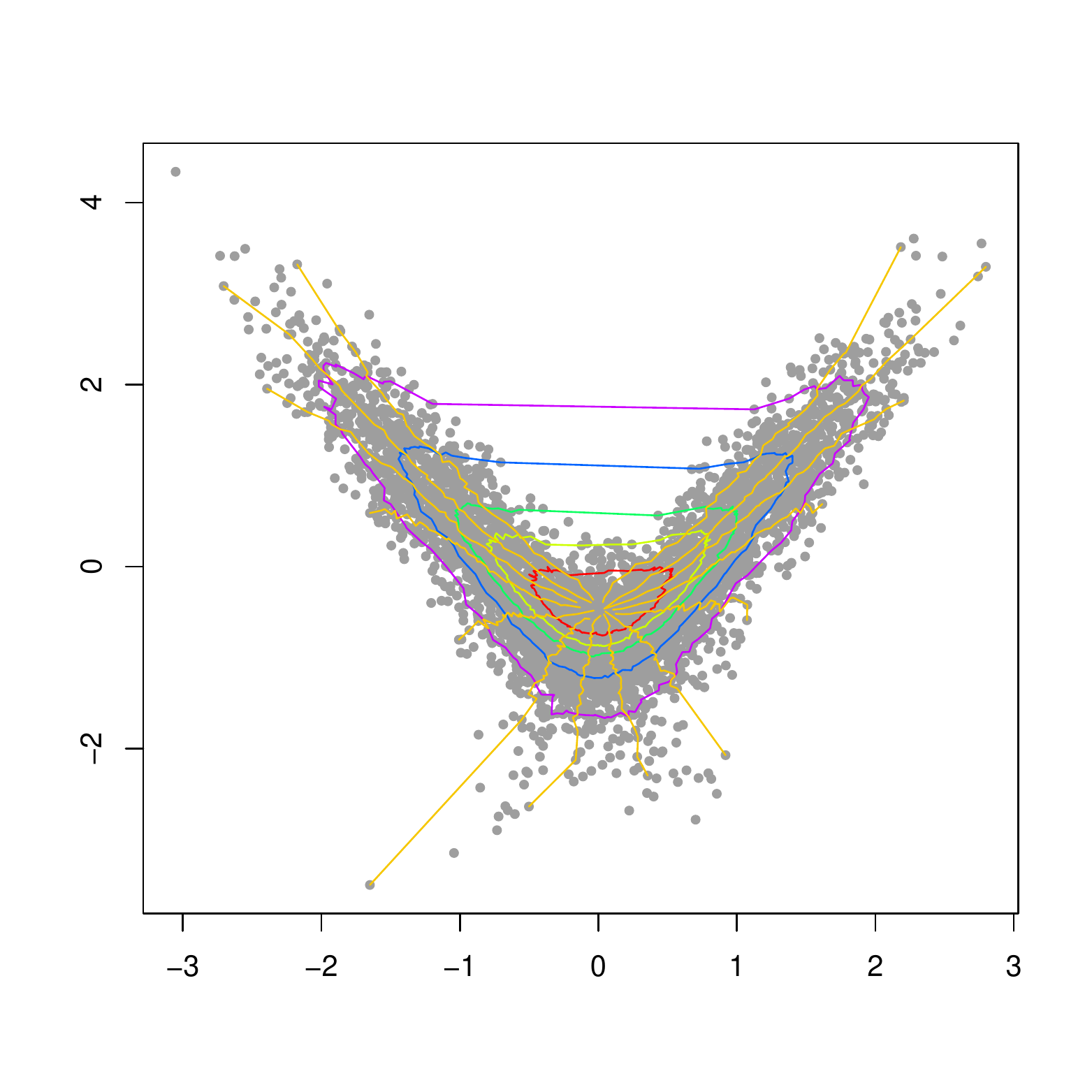} 
 \definecolor{color1}{HTML}{FF0000}
   \definecolor{color2}{HTML}{CCFF00}
     \definecolor{color3}{HTML}{00FF66}
     \definecolor{color4}{HTML}{0066FF}
       \definecolor{color5}{HTML}{CC00FF}
\fbox{ $\tau =$  \textcolor{color1}{\textemdash} 0.146 \quad
 \textcolor{color2}{\textemdash} 0.268\quad
 \textcolor{color3}{\textemdash} 0.39 \quad
 \textcolor{color4}{\textemdash}  0.634 \quad
  \textcolor{color5}{\textemdash} 0.878\quad
 }
\caption{Left panel: the two-dimensional unit ball, with its quantile regions of order $\tau = 0.146$, 0.268, 0.39, 0.634, and 0.878 (the balls~$\tau\bar{\mathbb{S}}_d$ with radius $\tau$ centered at the origin).  Right-hand panel: a numerical approximation, based on a sample of points generated from a banana-shaped mixture $\rm P$ of three normals, of the quantile regions~$Q_\pm \left(\tau\bar{\mathbb{S}}_d
\right)$ (same  $\tau$ values) of  $\rm P$.}
\label{Fig0}
\end{figure}

\subsection{{\it{\bf Vector quantiles}} and transportation to the unit cube}

The definition of center-outward quantiles in dimension $d\geq 2$ is based on a transport pushing ${\rm P}_{\Xb}$ forward to the spherical uniform ${\rm U}_d$ over the unit ball. Several authors (including  \citet{Carlier2016,chernozhukov2017monge, deb2019multivariate,ghosal2019multivariate}) rather privilege a transport to the Lebesgue uniform over the unit cube $[0,1]^d$. That choice produces, under the names of {\it vector ranks} and {\it vector quantile functions}, alternative $d$-variate distribution and quantile functions $\Fb_{\square}$ and $\Qb_{\,\square}\coloneqq \Fb_\square^{-1}$, say. 

As long as the objective is a characterization (in the empirical case, a consistent estimation) of ${\rm P}_X$ or (in the empirical case) the construction of distribution-free tests, that choice of a cubic uniform reference is perfectly fine. However, when it comes to defining quantile regions and contours, $\Qb_{\,\square}$ is running into the same conceptual difficulties as the inverse~$\Qb\coloneqq\Fb^{-1}$ of the  traditional multivariate distribution function 
$$\Fb: \ \xb=(x_1,\ldots,x_d)\mapsto \Fb(\xb )\coloneqq {\rm P}_\Xb\big[(-\infty,x_1]\times\ldots\times~\!(-\infty,x_d]
\big].$$
 Just as~$\Qb$,  $\Qb_{\,\square}$ is based on a $d$-tuple of marginal linear orderings rather than a unique center-outward one, and depends on a  $d$-tuple $(\tau_1,\ldots,\tau_d)$ of marginal orders rather than a unique $\tau$. This does not yield  (see, e.g., \citet{Genest01}) a satisfactory concept of quantile regions and contours. An inconvenient feature of unit cubes,  moreover,   is that there are many of them: besides
$[0, 1]^d$, constructed over the canonical coordinate system, all orthogonal transformations of
$[0, 1]^d$, yielding an infinite number of plausible quantile functions $\Qb_{\,\square}$,  are equally natural. All these~$\Qb_{\,\square}$'s carry the same information; as quantiles, however, their  interpretations may be extremely different. For instance, in dimension $d=2$, the same point $\xb$ can be of the form   $\xb=\Qb_{\,\square}^{(1)} (\tau_1\approx 1 ,\tau_2\approx 0)$ for some given unit square, of the form~$\xb=\Qb_{\,\square}^{(2)} (\tau_1\approx 0 ,\tau_2\approx 1)$ for some other one, and finally of the form~$\xb~=~\Qb_{\,\square}^{(3)} (\tau_1\approx 1/\sqrt{2} ,\tau_2\approx 1/\sqrt{2} )$ for a third one: in the first two cases, it is an extreme but in the third case it is not.    Finally, 
$\Qb_{\,\square}$, just as $\Qb$, is highly non-equivariant under orthogonal transformations of $\Xb$ while, thanks to the rotational invariance of ${\rm U}_d$ and the unit ball (see Proposition~2.2 in \cite{HHH}),   $\Qb_\pm$ is.

\cite{fan2022lorenz}  are building their approach to multiple-output Lorenz functions on such a~$\Qb_{\,\square}$,  which does not really allow for any interpretation based on  the contribution of the proportion $\tau$ of   ``most central,''    ``smallest,'' or ``less extreme''  values  of~$\Xb\sim {\rm P}_\Xb$.


\section{Center-outward Lorenz and Kakwani functions}
\label{sec: OTLK}

When generalizing the classical univariate definitions  \eqref{eq: altDefLC} and \eqref{KXYdef} of Lorenz and Kakwani  functions,  two distinct points of view can be adopted: 
\begin{compactenum}
\item[(a)] either   these classical functions are seen as   expectations of a family of  truncated random variables, e.g., of the form $X\ind_{\{X\in (-\infty,\, Q_X(u)]\}}$, or
\item[(b)]  as  integrals of  quantile functions, with the physical interpretation of  variations of potential or {\it work}.   
\end{compactenum}
Each of these two interpretations yields  center-outward versions which we  develop in Sections~\ref{incshsec}and~\ref{potfuncsec}, respectively.

\subsection{Center-outward  Lorenz, Kakwani,  and   \textbf{\textit{income share} functions}}\label{incshsec}

Emphasis, in the classical univariate definition \eqref {eq: altDefLC} of the Lorenz function, is on 
 quantile regions of the form~$(-\infty, Q(u)]$: the numerator, in \eqref{eq: altDefLC}, is the contribution  of the quantile 
 region~$(-\infty, Q(u)]$ to the mean $\mu_X\coloneqq \int_0^1Q(t) {\rm d}t  
$ (assumed to be finite). 
 In the center-outward approach,  emphasis is on  center-outward quantile regions of the form  (see~\eqref{ctauuni})
 \[
{\mathbb C}_\pm(\tau)\coloneqq  [ Q_\pm  (-\tau),  Q_\pm  (\tau)]=[Q((1-\tau)/2), Q((1+\tau)/2)] 
\]
 and their contribution to  $\boldsymbol\mu_\Xb$. This change of paradigm suggests the following definitions.

Let the random vectors $\Xb$ and $\Yb$  take values in ${\mathbb R}^{d_\Xb}$ and $ {\mathbb R}^{d_\Yb}$, respectively.  The notation~${\rm P}_{\Xb}$, $\rm P_\Yb$, and, whenever $\Xb$ and $\Yb$ are defined on the same probability space, ${\rm P}_{\Xb\Yb}$ (all assumed to be Lebesgue-absolutely continuous) is used in an obvious way, as well\linebreak as~$\Fb_{\Xb\pm}$, $\Qb_{\Xb\pm}$, $\Fb_{\Yb\pm}$, and $\Qb_{\Yb\pm}$. 

\begin{definition}
\label{c-oLor2} (Center-outward Lorenz and Kakwani functions)
\begin{compactenum}
\item[(i)]{\rm 
Call {\em (absolute) center-outward Lorenz function of }$\Xb$ the map-\linebreak ping~$u\mapsto L_{\Xb\pm}(u)$, where 
\begin{align}
\label{eq: coDefLC}
 L_{\Xb\pm}(u)&\coloneqq  \mathbb{E}\left[\Xb \ind_{\{ \Xb \in {\mathbb C}_{\Xb\pm}(u)\}} \right] , \quad 0\le u\le1;
\end{align}
}
\item[(ii)]  assuming   $d_\Xb = d=d_\Yb$,  {\em call (absolute) center-outward Lorenz function of~$\Yb$ with respect to $\Xb$} the mapping~$u\mapsto L_{\Yb/\Xb\pm}(u)$, where 
\begin{align}
\label{eq: coDefLC'}
	  L_{\Yb/\Xb\pm}(u)&\coloneqq   \mathbb{E}\left[\Yb \ind_{\{\Yb\in {\mathbb C}_{\Xb\pm}(u)\}} \right] , \quad 0\le u\le1;
\end{align}
\item[(iii)] call {\em (absolute) center-outward Kakwani function of~$\Yb$ with respect to $\Xb$}  the mapping~$u\mapsto K_{\Yb/\Xb\pm}(u)$, where
\begin{equation}\label{eq: Kakw}
  K_{\Yb/\Xb\pm}(u)\coloneqq  \expec \left[\Yb \ind_{\{\Xb\in {\mathbb C}_{\Xb\pm}(u)\}} \right] , \quad 0\le u\le1.
\end{equation}
\end{compactenum}
\end{definition}
Note that (iii), unlike~(ii), involves a joint distribution ${\rm P}_{\Xb\Yb}$  for $\Xb$ and $\Yb$. 

Instead of trimmed expectations, as in \eqref{eq: coDefLC}--\eqref{eq: Kakw},  one also may like to consider conditional expectations and replace, e.g., $\mathbb{E}\left[\Xb \ind_{\{ \Xb \in {\mathbb C}_{\Xb\pm}(u)\}} \right]$ with 
$$\mathbb{E}\left[\Xb \big\vert\,  \Xb \in {\mathbb C}_{\Xb\pm}(u) \right] = \mathbb{E}\left[\Xb \ind_{\{ \Xb \in {\mathbb C}_{\Xb\pm}(u)\}} \right]/u ,$$
 yielding {\it conditional center-outward Lorenz and Kakwani functions} which sometimes are easier to interpret.\vspace{-2mm}

\begin{definition}
\label{c-oLorCo} (Conditional center-outward Lorenz and Kakwani functions) 

Call {\it (absolute) conditional center-outward Lorenz function of} $\Xb$, {\em (absolute) conditional center-outward Lorenz function of~$\Yb$ with respect to} $\Xb$, and {\em (absolute) conditional center-outward Kakwani function of~$\Yb$ with respect to} $\Xb$ the functions 
\begin{equation}\label{defcond} 
 L^{\text{\tiny cond}}_{\Xb\pm}(u)\coloneqq L_{\Xb\pm}(u)/u ,\ 
  L^{\text{\tiny cond}}_{\Yb/\Xb\pm}(u)\coloneqq   L_{\Yb/\Xb\pm}(u)/u ,
  \text{ and } 
   K^{\text{\tiny cond}}_{\Yb/\Xb\pm}(u)\coloneqq K_{\Yb/\Xb\pm}(u)/u,\vspace{-3mm}
\end{equation}
 respectively.
\end{definition}
Conditional Lorenz and Kakwani functions convey the same information as their unconditional counterparts. In the absence of concentration, these conditional functions take value one for all $u$.

Just as $\Xb$ and $\Yb$ themselves, these center-outward functions are vector-valued ($L_{\Xb\pm}$ and~$L^{\text{\tiny cond}}_{\Xb\pm}$ are $d_\Xb$-dimensional, $L_{\Yb/\Xb\pm}$, $L^{\text{\tiny cond}}_{\Yb/\Xb\pm}$,  $K_{\Yb/\Xb\pm}$, and $K^{\text{\tiny cond}}_{\Yb/\Xb\pm}$   are~$d_\Yb$-dimensional).  Relative versions~$\mathcal  L_{\Xb\pm}$, $\mathcal  L^{\text{\tiny cond}}_{\Xb\pm}$, $\mathcal  L_{\Yb/\Xb\pm}$, $\mathcal  L^{\text{\tiny cond}}_{\Yb/\Xb\pm}$, $\mathcal  K_{\Yb/\Xb\pm}$, and~$\mathcal  K^{\text{\tiny cond}}_{\Yb/\Xb\pm}$  can be obtained by dividing each component of the corresponding  Lorenz and Kakwani absolute functions by its value at~$u=~\!1$ (provided that the latter are finite). These relative functions then define curves running  from~$(0,\ldots,0)$ to~$(1,\ldots,1)$ for $u=1$ which, if  the components of $\Xb$ (respectively, the components of~$\Yb$) all are non-negative, lie in the unit cube. These curves offer much information on the contributions of the central  quantile  regions to the mean of~$\Xb$ or the mean of~$\Yb$, with a large variety of possible behaviors. While a curve that coincides with the diagonal (for instance,~${\mathcal L}_{\Xb\pm}(u)= (u,\ldots,u)^\prime$ for $0\leq u\leq 1$) indicates the absence of concentration\footnote{The center-outward quantile function $\Qb_{\Xb \pm}$ of a degenerate random variable $\Xb$ taking   value~${\boldsymbol\mu}_{\bf X}$ with probability one ($\Xb\sim \delta_{{\boldsymbol\mu}_{\bf X}}$, the Dirac distribution at ${\boldsymbol\mu}_{\bf X}$) can be defined as  mapping any $ \mathbf{s} \in \mathbb{S}_d$ to $ {\boldsymbol\mu}_{\bf X}$. Indeed, there is only one way to push $\rm{U}_d$ forward to $\delta_{{\boldsymbol\mu}_{\bf X}}$. Then,  $L_{\Xb\pm}(u)= \int_0^u \int_{\mathcal{S}_{d-1}} \Qb_{\Xb \pm}( t\mathbf{s})\; \diff t \diff {\rm V}_{d-1}(\mathbf{s}) $  is well defined and equal to $u \boldsymbol{\mu}_\Xb$ (hence $L^{\text{\tiny cond}}_{\Xb\pm}(u)=\boldsymbol{\mu}_\Xb$). }, deviations the diagonal values  may yield very diverse interpretations:  for instance, an $i$th component of $  {\mathcal L}_{\Xb\pm}(u)- (u,\ldots,u)^\prime$   positive for~$0\leq u<u_0$, then negative for $u_0 <u\leq 1$ indicates a high contribution of $\Xb$'s  central quantile regions  (the ``middle class'') to the mean of $\Xb$'s $i$th components, with the more extremal quantiles contributing less. Mutatis mutandis, similar conclusions can be made for~${\mathcal L}_{\Yb/\Xb\pm}$ and ${\mathcal K}_{\Yb/\Xb\pm}$, allowing for a very detailed analysis of concentrations. Importantly, this analysis incorporates the dependencies between the components of the random vector, enabling a genuine notion of joint multivariate centrality.

 \begin{remark}
 The word ``concentration'' is not to be understood in the sense of concentration of probability measures. To comply with the traditional terminology of Lorenz curves where concentration refers to the concentration of income in the hands of a few rich people, we also describe the situation in which all individual incomes are the  same \textemdash i.e.,  $\Xb={\boldsymbol\mu}_{\bf X}$  almost surely\textemdash as the absence of concentration. 
\end{remark}

Center-outward versions of relative    Lorenz   functions actually have been  considered before, under a slightly different form, for  univariate non-negative variables.  For instance, \citet{davidson2018statistical}, in a study of  household income in  Canada, proposes an analysis of the {\it income share function}  ($\text{\it IS}_X$) of the ``middle class'' based on the {\it income share function}
\begin{equation}
\label{eq: ISClass}
\text{\it IS}_X(a,b)\coloneqq  \mu_X^{-1}\int_{aX_{(1/2)}}^{bX_{(1/2)}} x \diff F_X(x),
\end{equation}
where $X$ denotes the (nonnegative) revenue of a  Canadian household chosen at random,~$X_{(1/2)}\coloneqq Q_X(1/2)$ is the median of $X$, and $0\leq a < b$. 
For the sake of consistency, the version we are providing here is the relative  one which, of course, requires $0< \mu_X<\infty$.  
The relative center-outward Lorenz function 
 for a univariate random variable $X$
\begin{align}\label{ISpmdef}
\Ell_{X\pm}(u) &=  
  \frac{1}{2\mu_X}
   \left[
    \int_{[0,u]}Q_{X\pm}(t)\,\diff {\rm U}_{[0,1]} (t) +  \int_{[0,u]}Q_{X\pm}(-t)\,\diff {\rm U}_{[0,1]} (t)
\right] \\ 
&= \mu_X^{-1} \int_{\left[\frac{1-u}{2},\frac{1+u}{2}\right]} Q_X(t) \,\diff {\rm U}_{[0,1]} (t)
=\text{\it IS}_X\left(\frac{Q_X(\frac{1-u}{2})}{X_{(1/2)}},\frac{Q_X(\frac{1+u}{2})}{X_{(1/2)}}\right), \  \  u\in[0, 1)\nonumber
\end{align}
offers a natural quantile-based slight modification of Davidson's concept~\eqref{eq: ISClass}; the equal-tails choice of the  integration domain, moreover, provides a   sound justification of the  terminology  {\it middle-class}.

We finally stress that the concepts proposed in Definition~\ref{c-oLor2}  fill a gap by providing a new tool that has been long awaited. 
Indeed, \citet{davidson2018statistical} concludes that 
 \begin{quote}
 An ideal definition [of middle class]  would have to be based on all sorts of socio-economic characteristics of individuals and households ...
 \end{quote}  
Due to the lack of a multivariate version of  the central intervals (of a form  similar to~$[aX_{1/2}, bX_{1/2}]$) defining the ``middle class,''  he regretfully   concludes that, desirable as it is,  defining a generalized income share function  based on such a multivariate characterization of the middle class is {\it ``well beyond the scope of this work.'' } 
The same concern is expressed by \citet{atkinson2013identification} in their study of the evolution of the concept of Middle Class.
   The definitions, for univariate $Y$ and multivariate $\Xb$ (the socio-economic variables characterizing the ``middle class''), of the center-outward Kakwani functions $K_{Y/\Xb\pm}$ and $K^{\text{\tiny cond}}_{Y/\Xb\pm}$ of~$Y$ with respect to $\Xb$ and their relative counterparts    are providing  the perfect solution to that need. 
 Figure~\ref{fig: LorComp} provides,  for three classical distributions,  the classical Lorenz functions along with the newly proposed center-outward ones.
 
 \begin{figure}[h!]
\centering
\begin{tikzpicture}[]
\begin{axis}[
width=6cm,
 height=6cm,
    axis lines = left,
    xlabel = {$u$ },
    ylabel = {$\mathcal{L}_{X\pm}^{\text{cond}}(u)$},
    legend style={at={(0.65,1)},anchor=north east}
]
\addplot [
    domain=0:1, 
    samples=300, 
    color=blue,
]
{  ((x*(2+ln(4))-(-1+x)*ln(1-x)-(1+x)*ln(1+x))/2)/x };
\addplot [
    domain=0:1, 
    samples=300, 
    color=red,
]
{  1 };
\addplot [
    domain=0:1, 
    samples=300, 
    color=black,
]
{ (sqrt(2)*(sqrt(1+x) -sqrt(1-x) )/2)/x };
\end{axis}
\end{tikzpicture}
\begin{tikzpicture}[]
\begin{axis}[
width=6cm,
 height=6cm,
    axis lines = left,
    xlabel = {$u$ },
    ylabel = {$\mathcal{L}_{X\pm}(u)$},
    legend style={at={(0.65,1)},anchor=north east}
]
\addplot [
    domain=0:1, 
    samples=300, 
    color=blue,
]
{  (x*(2+ln(4))-(-1+x)*ln(1-x)-(1+x)*ln(1+x))/2 };
\addplot [
    domain=0:1, 
    samples=300, 
    color=red,
]
{  x };
\addplot [
    domain=0:1, 
    samples=300, 
    color=black,
]
{ sqrt(2)*(sqrt(1+x) -sqrt(1-x) )/2 };
\end{axis}
\end{tikzpicture}
\begin{tikzpicture}[]
\begin{axis}[
width=6cm,
 height=6cm,
    axis lines = left,
    xlabel = {$u$},
    ylabel = {$\mathcal{L}_X(u)$}
]
\addplot [
    domain=0:1, 
    samples=300, 
    color=blue,
]
{  -(x-1)*ln(1-x)+x };
\addplot [
    domain=0:1, 
    samples=300, 
    color=red,
]
{  x^2 };
\addplot [
    domain=0:1, 
    samples=300, 
    color=black,
]
{ 1- sqrt(1-x)  };
\end{axis}
\end{tikzpicture} 
\begin{tikzpicture}[]
\begin{axis}[
width=6cm,
 height=6cm,
    axis lines = left,
    xlabel = {$u$ },
    ylabel = {$\Lambda_{X\pm}(u)/\Lambda_{X\pm}(1)$},
    legend style={at={(0.65,1)},anchor=north east}
]
\addplot [
    domain=0:1, 
    samples=300, 
    color=blue,
]
{  ( -(x-1)*ln((1-x)/2)/4 + (1+x)*ln((1+x)/2)/4 + ln(2)/2 ) *2/ln(2) };
\addplot [
    domain=0:1, 
    samples=300, 
    color=red,
]
{ x^2 };
\addplot [
    domain=0:1, 
    samples=300, 
    color=black,
]
{ (- ( sqrt(1+x)+ sqrt(1-x) )/(2*sqrt(2)) +1/sqrt(2))/ (1/sqrt(2)-1/2) };
\end{axis}
\end{tikzpicture}
\fbox{  \textcolor{black}{\textemdash} Pareto  {$\left(x_m=\frac{1}{4}, \alpha=2\right)$} \quad
 \textcolor{blue}{\textemdash} Exponential $(\lambda=2)$\quad
 \textcolor{red}{\textemdash} Uniform on [0,1] }
\caption{\small\slshape
 Top: center-outward relative conditional Lorenz functions $\mathcal{L}_{X\pm}^{\text{\rm cond}}$ and  center-outward  relative Lorenz functions $\mathcal{L}_{X\pm}$. Bottom left: classical Lorenz functions. Bottom right: center-outward  relative Lorenz potential functions $\Lambda_{X\pm}(u)/\Lambda_{X\pm}(1)$. The parameters are chosen such that each distribution has expected value  1/2.}   
\label{fig: LorComp} \vspace{-6mm}
\end{figure}
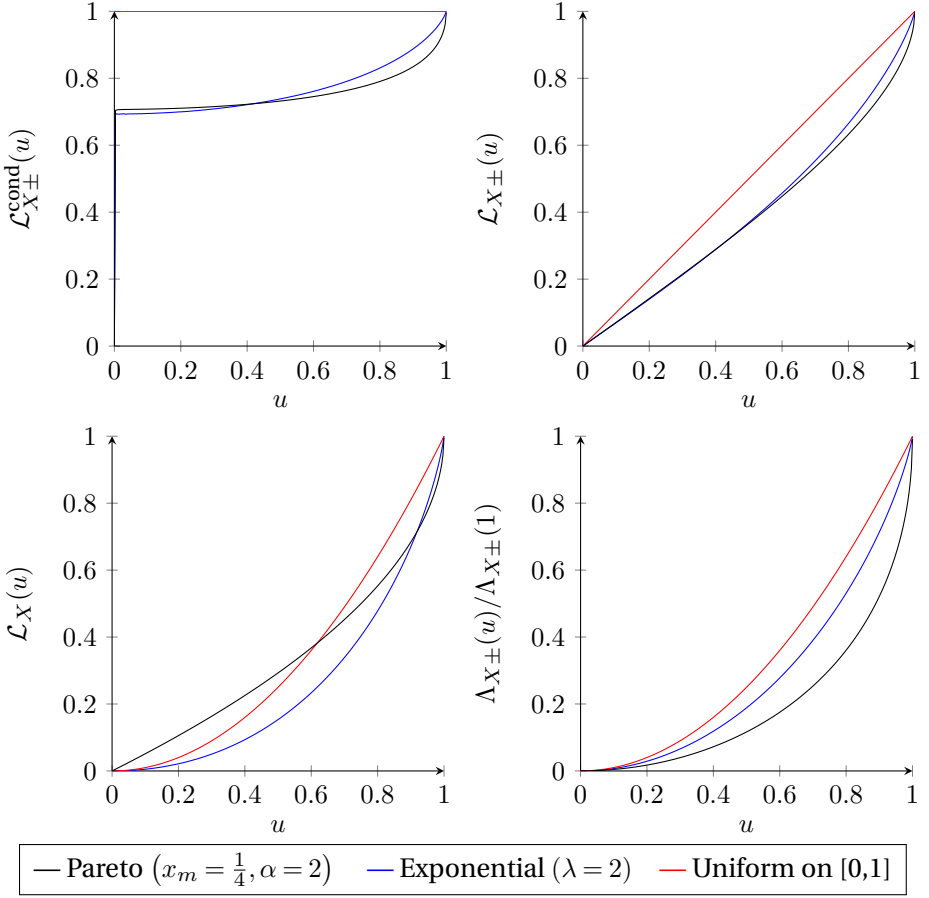

\subsection{Center-outward  Lorenz and Kakwani   \textbf{\textit{potential}} functions}
\label{potfuncsec}

An alternative point of view on \eqref{eq: altDefLC} consists of emphasizing the interpretation of the classical univariate Lorenz and Kakwani functions as variations of a potential  function $\psi$. 
 That potential describes how the mass of the (uniform) reference measure is  optimally (monotonically, in the sense of cyclical monotonicity) reorganized into the probability distribution~${\rm P}_\Xb $ of~$\Xb$. The potential variation $\psi(\ub_1)-\psi(\ub_2)$   between two points~$\ub_1$ and~$\ub_2$  can be understood as the work to be exerted on an object of unit mass that has to travel along any arbitrary smooth\footnote{A path between two points $\ub_1$ and $\ub_2$ is said to be {\it smooth} whenever it admits a parametrisation $\mathbf{r} : [0,1] \to \reals^d$ such that  $\mathbf{r}(0) = \ub_1$, $\mathbf{r}(1) = \ub_2$, and $t\mapsto\mathbf{r}(t)$ is  continuously differentiable.} path from $\ub_1$ to~$\ub_2$.

This interpretation is  underlying the definitions we now propose. Recall that~${\rm V}_{d-1}$ denotes the uniform distribution over the unit hypersphere $\mathcal S _{d-1}$ which, for~$d=1$,   reduces to the uniform discrete distribution ${\rm V}_0$ over $\mathcal S _{0}=\{-1, 1\}$.
\begin{definition} 
\label{Defn42bis}
(Lorenz potential functions)

\noindent
\ \ \  Assuming that they are well-defined, 
\begin{compactenum}
\item[(i)]{\rm 
call {\em (absolute) center-outward Lorenz~potential function of }$\Xb$ the map
\begin{align}
\label{eq: coDefLCPot}
u\mapsto \Lambda_{\Xb\pm}(u) &\coloneqq  \int_{{\mathcal S}_{d_\Xb-1}}\psi_{\Xb\pm}(u\sbold ) \diff {\rm V}_{d-1}(\sbold)=\E[\psi_{\Xb\pm}( u\Sb )], \quad u\in [0,1); 
\end{align}
}
\item[(ii)]  {\em assuming   $d_\Xb = d=d_\Yb$, call }  (absolute) center-outward Lorenz potential function of~$\Yb$ with respect to $\Xb$   {\em  the map}
\begin{align}
\label{eq: coDefLCPot'}
	u\mapsto \Lambda_{\Yb/\Xb\pm}(u)&\coloneqq   \int_{{\mathcal S}_{d -1}}\psi_{\Yb\pm}\left(\Fb_{\Yb\pm}\circ \Qb_{\Xb\pm}(u\sbold)\right) \diff {\rm V}_{d-1}(\sbold), \quad u\in [0,1).  
\end{align}
\end{compactenum}
\end{definition}

Contrary to the Lorenz functions in Definition~\ref{c-oLor2}, which take values in $\mathbb R^{d_\Xb}$ or~$\mathbb R^{d_\Yb}$,  Lorenz potential functions are real-valued.  

From a physical perspective,~$\Lambda_{\Xb\pm}(u)$ in~\eqref{eq: coDefLCPot} can be interpreted as the expected  work  it takes, in the field with potential $\psi$,  to move  a particle with unit mass  from zero to a random point  $\bf x$ uniformly distributed over the quantile contour of order $u$.

The  concept of Lorenz potential function nicely simplifies in the case of (nontrivial) spherical distributions. In that case, we have  
\[
\frac{\Lambda_{\Xb\pm}(u)}{\Lambda_{\Xb\pm}(1)} = \Ell_{\lVert \Xb\rVert}(u), \qquad u \in [0,1)
\]
The relative Lorenz potential function of a spherical random variable $\Xb$ thus reduces to the classical univariate Lorenz function for $\lVert \Xb\rVert$,  since~$ \psi_{\Xb\pm}(u\sbold )$ then only depends on~$u$. 

\section{Multivariate center-outward  concentration indices}
\label{SecGini}

\subsection{Center-outward    Gini  and Pietra concentration indices}\label{ginisec} Let us assume that all components of $\Xb$ (for the center-outward Lorenz function of $\Xb$) or $\Yb$ (for the center-outward Lorenz function of $\Yb$ with respect to $\Xb$) are positive-valued with finite means, so that the corresponding relative functions (see Section~\ref{incshsec}) make sense. The following indices then are natural generalizations of the traditional univariate ones.

\begin{definition}
\label{c-oLor2'} 
(Center-Outward Gini and Pietra indices)
\begin{compactenum}
\item[(i)]{\rm  Call   
\begin{align}
\label{eq: coDefGinimult}
G_{\Xb \pm} \coloneqq \frac{2}{\sqrt{d}} \int_{0}^1 \left\Vert (u,\ldots,u)^\prime - {\mathcal L}_{\Xb \pm}(u))
\right\Vert {\rm d}u
\end{align}
and 
\begin{align}
\label{eq: coDefPietramult}
P_{\Xb \pm} \coloneqq \frac{1}{\sqrt{d}} \sup_{0\leq u\leq 1} \left\Vert (u,\ldots,u)^\prime - {\mathcal L}_{\Xb \pm}(u))
\right\Vert
\end{align}
the {\em  Gini} and {\it Pietra  center-outward G-indices of} $\Xb$ 
 respectively;
}
\item[(ii)]{\rm  call %
\begin{align}
\label{eq: coDefGinimult'}
G_{\Yb/\Xb \pm} \coloneqq \frac{2}{\sqrt{d}}\int_{u=0}^1 \left\Vert (u,\ldots,u)^\prime - {\mathcal L}_{\Yb/\Xb \pm}(u)
\right\Vert {\rm d}u
\end{align}
and
\begin{align}
\label{eq: coDefPietramult'}
P_{\Yb/\Xb \pm} \coloneqq\frac{1}{\sqrt{d}} \sup_{0\leq u\leq 1} \left\Vert (u,\ldots,u)^\prime - {\mathcal L}_{\Yb/\Xb \pm}(u)
\right\Vert {\rm d}u
\end{align}
the {\em  Gini} and {\it Pietra center-outward G-indices of $\Yb$ with respect to $\Xb$}, 
respectively;
}
\item[(iii)] call 
\begin{align}
\label{eq: coDefGinimult''}
GK_{\Yb/\Xb \pm} \coloneqq \frac{2}{\sqrt{d}}\int_{u=0}^1 \left\Vert (u,\ldots,u)^\prime - {\mathcal K}_{\Yb/\Xb \pm}(u)
\right\Vert {\rm d}u
\end{align}
and
\begin{align}
\label{eq: coDefPietramult''}
PK_{\Yb/\Xb \pm} \coloneqq\frac{1}{\sqrt{d}} \sup_{0\leq u\leq 1} \left\Vert (u,\ldots,u)^\prime - {\mathcal K}_{\Yb/\Xb \pm}(u)
\right\Vert {\rm d}u,
\end{align}
the {\em  center-outward Gini} and  {\it Pietra K-indices of $\Yb$ with respect to $\Xb$},   
respectively. 
\end{compactenum}
\end{definition}

These measures quantify the discrepancy between the rescaled Lorenz and Kakwani curves and the diagonal of the corresponding unit cube.
The scaling factors in the definitions~(5.1)-(5.6) are chosen so that the indices take their values between zero and one. 
The quantities~(5.1)-(5.6) are maximal for ${\mathcal L}_{\Xb \pm}(u)$ (respectively, ${\mathcal L}_{\Yb / \Xb \pm}(u)$, ${\mathcal K}_{\Yb / \Xb \pm}(u)$) arbitrarily close to~$(0,\ldots, 0)^\prime$ for $0 \le u< 1$, to $(1,\ldots, 1)^\prime$ for $u= 1$. 

\subsection{Center-outward   Gini  and Pietra potential indices}
\label{ginisec}

Definition~\ref{Defn42bis} and the interpretation  of  Lorenz functions as potentials  naturally suggest   different indices.
\begin{definition}( Center-outward Gini and Pietra potential  concentration indices)
\label{dfn: GinPot}
\begin{compactenum}
\item[]{\it (i)} {\rm 
 Call 
\begin{equation}
\label{eq: coGini}
G^\Lambda_{\Xb\pm}\coloneqq 2 \int_0^1 \left [ u - \Lambda_{\Xb\pm}(u)/\Lambda_{\Xb\pm}(1) \right ] \diff u 
\end{equation}
and
\begin{align}
\label{eq: coPietra}
P^\Lambda_{\Xb\pm}\coloneqq  \sup_{0 \leq u \leq 1} \left[ u - \Lambda_{\Xb\pm}(u)/\Lambda_{\Xb\pm}(1) \right] 
\end{align}
the  {\em  center-outward Gini  and Pietra potential  concentration indices of } $\Xb$, respectively;
}
\item[(ii)]  {\em assuming   $d_\Xb = d=d_\Yb$, call} 
\begin{align}
\label{eq: coDefLCPot'}
G^\Lambda_{\Yb /\Xb}\coloneqq	2 \int_0^1 \left [ u - \Lambda_{\Yb/\Xb\pm}(u)/\Lambda_{\Yb/\Xb\pm}(1) \right] \diff u 
\end{align}
{\rm and}
\begin{align}
\label{eq: coDefLCPot'}
P^\Lambda_{\Yb /\Xb}\coloneqq	 \sup_{0\leq u\leq 1} \left[ u - \Lambda_{\Yb/\Xb\pm}(u)/\Lambda_{\Yb/\Xb\pm}(1) \right] ,\end{align}
{\rm the}  {\it center-outward Gini and Petra potential concentration indices  of~$\Yb$ with respect to $\Xb$,}  {\rm respectively. }
\end{compactenum}
\end{definition}

These indices take value zero 
 when  $\Lambda_{\Yb/\Xb\pm}(u)/\Lambda_{\Yb/\Xb\pm}(1)  = u$ for almost all~$0\leq~\!u\leq~\!1$,   that is, when $\mathbf{X}$  is an almost sure constant.  This, however, is not necessary: the same indices also vanish if, for instance,   the expected slope of the line joining $0$ and $ \psi(u \mathbf{S} )$,\linebreak  where~$\mathbf{S}\sim\rm{V}_{d-1} $,  does not depend on $u$.

\subsection{Center-outward quantile function and Gini mean difference }
 A global multivariate Gini index also can be constructed as an extension, based on  center-outward quantile functions,  of the characterization~\eqref{eq: GiniAlt} of the traditional concept.
\begin{definition}
\label{def: GKM}
Call 
\begin{equation}
\label{eq: GKM}
G^\mathrm{KM}_{\Xb\pm}\coloneqq  \frac{1}{2\kappa}\int\int \lVert {\bf Q}_{\Xb\pm}(\grx) - {\bf Q}_{\Xb\pm}(\gry) \rVert  \diff \rm{U}_d( \grx ) \diff \rm{U}_d(\gry),
\end{equation}
where 
$
\kappa \coloneqq  \int \lVert {\bf Q}_\pm(\grx) \rVert  \diff \mathrm{U}_d(\grx),
$
the {\it Koshevoy--Mosler multivariate center-outward Gini index of $\Xb$.}
\end{definition}

It is easy  to see that $G^\mathrm{KM}_{\Xb\pm }$ is similar to the ``Gini mean difference''  proposed and studied by \citet{koshevoy1997multivariate}. The introduction of the center-outward quantile functions provides a new interpretation of that coefficient. Indeed, the integral in \eqref{eq: GKM} is a measure of the variability of the transport plan ${\bf Q}_{{\bf X}\pm}$  in an $L^1(\mathrm{U}_d)$ sense. From a geometric point of view, the value of this index is an intrinsic property of elements of the Wasserstein tangent space at $\mathrm{U}_d$. 
Furthermore, that concept coincides, in dimension $d=1$, with the classical Gini coefficient    $G_X$. 

\begin{proposition} 
For a nonnegative real-valued random variable $X$ with $\expec X \neq 0$,   
\[
G^{\rm{KM}}_{X\pm} = G_X.
\]
\end{proposition}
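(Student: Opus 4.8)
The plan is to reduce the multivariate definition~\eqref{eq: GKM} to the one-dimensional setting and then match it against the classical characterization~\eqref{eq: GiniAlt} of $G_X$. First I would recall that in dimension $d=1$ the spherical uniform $\rm U_1$ is just the Lebesgue uniform on the interval $(-1,1)$, and the center-outward quantile function is $Q_{X\pm}(u) = Q_X((u+1)/2)$ for $u\in(-1,1)$, where $Q_X$ is the ordinary quantile function of $X$. Substituting $x = (u+1)/2$ (so $\diff\rm U_1 = 2\,\diff x$, but the normalisation is absorbed), the double integral over $(-1,1)^2$ becomes an integral over $(0,1)^2$, giving
\[
\int\!\!\int_{(-1,1)^2}\!\!\lvert Q_{X\pm}(u) - Q_{X\pm}(v)\rvert\,\diff\rm U_1(u)\,\diff\rm U_1(v) = \int_0^1\!\!\int_0^1 \lvert Q_X(s) - Q_X(t)\rvert\,\diff s\,\diff t = \expec\lvert X_1 - X_2\rvert,
\]
where $X_1,X_2$ are i.i.d.\ copies of $X$, using that $(Q_X(S),Q_X(T))$ with $S,T$ independent uniforms on $(0,1)$ has the same law as $(X_1,X_2)$.

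Next I would treat the normalising constant. By the same change of variables,
\[
\kappa = \int_{(-1,1)}\lvert Q_{X\pm}(u)\rvert\,\diff\rm U_1(u) = \int_0^1 \lvert Q_X(t)\rvert\,\diff t = \int_0^1 Q_X(t)\,\diff t = \mu_X,
\]
where the absolute value drops because $X$ is nonnegative (hence $Q_X \geq 0$), and the last equality is the defining identity $\mu_X = \int_0^1 Q_X(t)\,\diff t$ from the very first display of Section~\ref{histsec}. Here $\expec X \neq 0$ together with nonnegativity ensures $\mu_X > 0$, so the division is legitimate. Putting the two computations together,
\[
G^{\rm KM}_{X\pm} = \frac{1}{2\kappa}\,\expec\lvert X_1 - X_2\rvert = \frac{\expec\lvert X_1 - X_2\rvert}{2\mu_X},
\]
which is exactly the first expression for $G_X$ in~\eqref{eq: GiniAlt}; invoking~\eqref{eq: GiniAlt} completes the proof.

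I do not expect any serious obstacle here: the argument is essentially a careful unwinding of definitions plus one change of variables. The only point requiring a little care is bookkeeping of the uniform-measure normalisation on $(-1,1)$ versus $(0,1)$: since the $\frac{1}{2\kappa}$ prefactor and the change of variables both contribute Jacobian-type factors of $2$, one must check they cancel so that no spurious constant survives. A clean way to avoid any slip is to note that the ratio $G^{\rm KM}_{X\pm}$ is scale-invariant under the affine reparametrisation $u\mapsto (u+1)/2$ of the reference measure, so the factors in numerator and denominator cancel automatically, and one is left directly with $\expec\lvert X_1-X_2\rvert/(2\mu_X) = G_X$.
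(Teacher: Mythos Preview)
Your proposal is correct and follows essentially the same approach as the paper: both proofs reduce the one-dimensional case via the change of variables $u\mapsto (u+1)/2$ linking $Q_{X\pm}$ to $Q_X$, and identify the resulting expression with the mean-difference form~\eqref{eq: GiniAlt} of $G_X$. Your version is slightly more explicit about verifying $\kappa=\mu_X$ (where nonnegativity of $X$ is used to drop the absolute value), a step the paper's proof leaves implicit.
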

\begin{proof}
This is a consequence of the change of variable formula and the relationships from Section~2.1. Indeed,
\begin{align*}
G_X &= \frac{1}{2\mu}\int\int\big\vert Q(t) - Q(s) \big\vert  \ind_{ \{0\le s \le 1\} }   \ind_{ \{0\le t \le 1\} }\diff s  \diff t \\
 & =\frac{1}{2\mu}\int\int \big\vert Q_\pm(t) - Q_\pm(s) \big\vert  \frac{\ind_{ \{-1\le s \le 1\} }} {2}  \frac{\ind_{ \{-1\le t \le 1\} } }{2}\diff s  \diff t = G^{\mathrm{KM}}_{X\pm}.
\end{align*} \vspace{-20.5mm}

 \hfill \qedhere
\end{proof}\medskip

\section{Estimation}
\label{sec: Est}

So far, only population concepts have been considered. In practice, one is dealing with observations
\begin{equation}
\label{eq; samplingMechanism}
\left( \Xb,\Yb\right)\n\!\!\coloneqq\! \left((\Xb\n_1,\Yb\n_1),\ldots,(\Xb\n_n,\Yb\n_n)\right), \quad n \in \NN .
\end{equation}
Whenever it can be assumed that these observations are an i.i.d.\ sample  with distribu\-tion~$\prob_{\Xb,\Yb}$, $\Xb_i\n \sim \prob_\Xb \in \Prob^{d_\Xb}$, and $\Yb_i\n\sim\prob_\Yb \in \Prob^{d_\Yb}$, the empirical counterparts (described in Section~\ref{sec: MultLoEst} below)  constitute  estimators of  the population Lorenz and Kakwani functions $L_{\Xb\pm}$,  $L_{\Yb/\Xb\pm}$,  $K_{\Yb/\Xb\pm}$, etc.\  of Definition~\ref{c-oLor2}; the consistency properties of these estimators are investigated in Section~\ref{sec: MultLoEst}.  Depending on the context, this assumption of an~i.i.d.\ sample,  however, may be unrealistic; the same estimators, then, should be considered from a purely descriptive point of view and consistency is meaningless.

In the remainder of this section, we thus make the assumption that \eqref{eq; samplingMechanism} is an~i.i.d.\ sample  and consider the problem of estimating  the  corresponding population Lorenz and Kakwani functions. 
 Whenever necessary (i.e., when dealing with a.s.\ convergence), we tacitly assume that  the sequence~$\{\left( \Xb,\Yb\right)\n$, $n\in{\mathbb N}\}$ is defined on a single probability space~$(\Omega, \mathcal{A}, \mathbb{P})$. For simpli\-city, the superscripts $^{(n)}$ are omitted when no confusion is possible.

\subsection{Empirical  distribution and quantile functions}
\label{eq: EmpRanks}
 The empirical counter\-parts~$\Fb_{\Xb\pm}\n$ of $\Fb_{\Xb\pm}$ and  $\Qb_{\Xb\pm}\n$ of $\Qb_{\Xb\pm}$ are obtained as the solution of an optimal matching problem between the sample values~$\Xb\n_1,\ldots,\Xb\n_n$ and a ``regular'' grid ${\mathfrak G}\n$ with gridpoints ${\scriptstyle{\mathfrak{G}}}\n_1,\ldots,{\scriptstyle{\mathfrak{G}}}\n_n$ such that the empirical distribution over~${\mathfrak G}\n$ converges weakly to ${\rm U}_d$. More precisely,  
	$\left(\Fb_{\Xb\pm}\n(\Xb\n_1),\ldots, \Fb_{\Xb\pm}\n(\Xb\n_n)\right)$ 
 is defined as the mini\-mizer~$\left(
{\scriptstyle{\mathfrak{G}}}\n_{\pi^*(1)},\ldots,{\scriptstyle{\mathfrak{G}}}\n_{\pi^*(n)}\right)$, over the $n!$ possible permuta\-tions~$\pi\in\Pi_n$ of the integers~$\{1,\ldots,n\}$, of the sum of squared Euclidean distances 
$
  \sum_{i=1}^n\big\Vert {\Xb}\n_{i} - {\scriptstyle{\mathfrak{G}}}\n_{\pi(i)})
\big\Vert ^2,
$
while $\Qb_{\Xb\pm}\n$ is characterized by  $\Qb_{\Xb\pm}\n({\scriptstyle{\mathfrak{G}}}\n_{\pi^*(i)})\coloneqq \Xb\n_i$, $i=1,\ldots,n$.
%

The empirical distribution  function $\Fb_{\Xb\pm}\n$  arising from this optimization procedure is  defined at the observations only, and the empirical quantile  function~$\Qb_{\Xb\pm}\n$ at the gridpoints. Both, however, can be extended into $\overline{\Fb}_{\Xb\pm}\n$ with domain $\mathbb R^d$ and~$\overline{\Qb}_{\Xb\pm}\n$ with domain $\mathbb S_d$ by means of an interpolation that preserves the properties (cyclical monotonicity) of the transport map (i.e., being the gradient of a convex potential). 
Indeed, as shown in \citet{del2018center}, there exist smooth and  Fenchel--Legendre conjugate empirical potentials      $\bar\psi_{\Xb\pm}^{* (n)}$ and~$\bar\psi_{\Xb\pm}^{(n)}$ such that 
$$\overline{\Fb}_{\Xb\pm}\n(\Xb _i)\coloneqq \nabla\bar\psi_{\Xb\pm}^{*(n)}(\Xb _i) = \Fb_{\Xb\pm}\n(\Xb _i)\quad\text{ and}\quad \overline{\Qb}_{\Xb\pm}\n(\Xb _i)\coloneqq \nabla\bar\psi_{\Xb\pm}^{(n)}(\Xb _i) = \Qb_{\Xb\pm}\n(\Xb _i)$$ for all $i=1,\ldots, n$; these potentials can be constructed from~$\Fb_{\Xb\pm}\n$ (or from $\Qb_{\Xb\pm}\n$ in a similar fashion) via the following {\it Yosida--Moreau regularization}.

For any realization $\grx\n$  of $\Xb\n$,  owing to the cyclical monotonicity property of optimal transport, there exists $\{\psi_{i,\pm}^*,\ i=1,\ldots, n\}$ such that 
\[
\grx_i\n \cdot {\scriptstyle{\mathfrak{G}}}\n_i -\psi_{i,\pm}^* \geq \max_{1\le j\le n} (\grx_i\n \cdot {\scriptstyle{\mathfrak{G}}}\n_j- \psi_{j,\pm}^* ) , \quad  i=1,\ldots, n
\]
and the equality is strict when the maximum is restricted to $j\neq i$.
Defining the convex map
\[
\grx \mapsto \phi^*( \grx)  = \max_{1\le j\le n} (\grx \cdot {\scriptstyle{\mathfrak{G}}}\n_j -\psi_{j,\pm}^* ) , \quad  i=1,\ldots, n,
\]
and its regularized version 
\begin{equation}
\label{eq: regPot} \bar\psi_{\Xb\pm}^{*(n)}(\grx )\coloneqq 
\phi_\epsilon( \grx)  \coloneqq \inf_{\gry\in {\mathfrak G}\n} \left[ \phi^*( \mathbf{y }) + \frac{1}{2\epsilon}  \lvert \mathbf{y }-\mathbf{x } \rvert^2  \right]
\end{equation}
where 
\[
\epsilon\coloneqq  \frac{1}{2} \min_i \big( (\grx_i \cdot {\scriptstyle{\mathfrak{G}}}\n_i -\psi_{i,\pm}^* ) - \max_{j\neq i} (\grx_i \cdot {\scriptstyle{\mathfrak{G}}}\n_j- \psi_{j,\pm}^* ) \big),
\]
the desired regularized empirical distribution function is $\overline{\Fb}_{\Xb\pm}\n\coloneqq \nabla\bar\psi_{\Xb\pm}^{*(n)}$, which moreover satisfies the Glivenko-Cantelli property 
\begin{equation}
\label{eq: GlivCant}
\lim_{n \to \infty} \sup_{\grx\in \reals^d} \lVert \bar{\Fb}_{\Xb\pm}^{(n)}(\grx)-  \Fb_{\Xb\pm}(\grx) \rVert= 0 \quad \text{${\rm P}_{\Xb}${\rm -a.s.} }
\end{equation}
As above, one can also estimate the potential corresponding to the center-outward quantile function. For any realisation $\grx^{(n)}$  of $\Xb^{(n)}$ there exists an $n$-tuple~$\{\psi_{i,\pm},\ i=1,\ldots, n\}$ of real numbers such that 
\[
\grx_i\n \cdot {\scriptstyle{\mathfrak{G}}}\n_i -\psi_{i,\pm} \geq \max_{1\le j\le n} (\grx_j\n \cdot {\scriptstyle{\mathfrak{G}}}\n_i- \psi_{j,\pm} ) , \quad  i=1,\ldots, n,
\]
where the inequality is strict for $j\neq i$;  define $\hat{\psi}\n_{\Xb}$ as 
\begin{equation}\label{hatpsidef}
\gru \mapsto \hat{\psi}\n_{\Xb}( \gru)  \coloneqq \max_{1\le j\le n} (\grx_j\n \cdot \gru - \psi_{j,\pm} ) , \quad  i=1,\ldots, n.
\end{equation}
Similar interpolations $\mathbf{F}_{\Yb\pm}\n, \bar{\mathbf{F}}_{\Yb\pm}\n, \mathbf{Q}_{\Yb\pm}\n$, and $\bar{\mathbf{Q}}_{\Yb\pm}\n$ can be constructed for ${\mathbf{F}}_{\Yb\pm}$ and $\mathbf{Q}_{\Yb\pm}$; we refer to \citet{del2018center} for details. 

\subsection{Estimation of center-outward Lorenz and Kakwani  functions}
\label{sec: MultLoEst}
As estimators of  the Lorenz and Kakwani functions $L_{\Xb\pm}$, $L_{\Yb/\Xb\pm}$, and $K_{\Yb/\Xb\pm}$ of Definition~\ref{c-oLor2}, we propose the statistics 
\begin{compactenum}
\item[(a)]
 $\displaystyle{
u \mapsto  {\hat L}\n_{\Xb\pm}(u)\coloneqq  \frac1n\sum_{i=1}^n \Xb_i \ind{\{ \lVert \Fb_{\Xb,\pm}^{\n} (\Xb_i)  \rVert\le u  \}},
}
$ 
\item[(b)]
$
\displaystyle{u \mapsto  {\hat L}\n_{\Yb/\Xb\pm}(u)\coloneqq \frac1n\sum_{i=1}^n \Yb_i \ind{\{ \lVert \bar{\Fb}_{\Xb,\pm}^{\n} (\Yb_i)  \rVert\le u  \}}
 ,}
$ {\rm and}  
\item[(c)]
$
\displaystyle{u \mapsto  {\hat K}\n_{\Yb/\Xb\pm}(u)\coloneqq \frac1n\sum_{i=1}^n \Yb_i \ind{\{ \lVert \Fb_{\Xb,\pm}^{\n} (\Xb_i)  \rVert\le u  \}}
 }${\rm , respectively $(0\leq u\leq 1)$.}
\end{compactenum}

Remark that, replacing ${\Fb}_{\Xb,\pm}^{\n}$ by $\bar{\Fb}_{\Xb,\pm}^{\n}$  in \textit{(a)} and \textit{(c)}  yields  exactly the  same estimators: computing 
 $\bar{\Fb}_{\Xb,\pm}^{\n}$, thus, is not necessary.
For \textit{(b)}, quite on the contrary,  computing $\bar{\Fb}_{\Xb,\pm}^{\n}$ is required as ${\Fb}_{\Xb,\pm}^{\n}$ is not defined at the $\Yb_i$'s.

As estimators of the conditional counterparts of these curves (Definition~\ref{c-oLorCo}), we propose 
\begin{compactenum}
\item[(a)]
 $\displaystyle{
u \mapsto  {\tilde L}\n_{\Xb\pm}(u)\coloneqq  \frac{n}{n_0 + n_S \lfloor u (n_R+1) \rfloor } {\hat L}\n_{\Xb\pm}(u),
}
$\smallskip 
\item[(b)]
$
\displaystyle{u \mapsto  {\tilde L}\n_{\Yb/\Xb\pm}(u)\coloneqq \frac{n}{n_0 + n_S \lfloor u (n_R+1) \rfloor }{\hat L}\n_{\Yb/\Xb\pm}(u) ,}
$ {\rm and}\smallskip  
\item[(c)]
$
\displaystyle{u \mapsto  {\tilde K}\n_{\Yb/\Xb\pm}(u)\coloneqq \frac{n}{n_0 + n_S \lfloor u (n_R+1) \rfloor }{\hat K}\n_{\Yb/\Xb\pm}(u)
 }${\rm , respectively $(0\leq u\leq 1)$.}\medskip
\end{compactenum}

We then have the following consistency results.

\begin{proposition}
\label{prop: Consist}
Let $(\Xb, \Yb)^{(n)}$ be as in~\eqref{eq; samplingMechanism}. 
Assuming that $\expec \lVert \Xb\rVert $ and $\expec \lVert \Yb\rVert $ exist and are finite,  
\begin{compactenum}
\item[(I)]${\hat L}\n_{\Xb\pm}$,  ${\hat L}\n_{\Yb/\Xb\pm}$, and ${\hat K}\n_{\Yb/\Xb\pm}$  are uniformly (in  $u \in [0,1]$) strongly consistent estimators of ${L}_{\Xb\pm}$,  ${L}_{\Yb/\Xb\pm}$, and ${K}_{\Yb/\Xb\pm}$, respectively;
\item[(ii)] for any fixed $\eps >0$,   ${\tilde L}\n_{\Xb\pm}(u),{\tilde L}\n_{\Yb/\Xb\pm}(u)$, and $ {\tilde K}\n_{\Yb/\Xb\pm}(u)$ are uniformly (in  $u \in [\eps, 1]$)  strongly  consistent estimators of ${L}^{\text{\tiny cond}}_{\Xb\pm}$,  ${L}^{\text{\tiny cond}}_{\Yb/\Xb\pm}$, and ${K}^{\text{\tiny cond}}_{\Yb/\Xb\pm}$, respectively.
\end{compactenum}
\end{proposition}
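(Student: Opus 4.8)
The plan is to reduce both parts to a functional strong law of large numbers over the nested families of indicator sets $\{\xb:\lVert\Fb_{\Xb\pm}(\xb)\rVert\le u\}$, $u\in[0,1]$, combined with the Glivenko--Cantelli property~\eqref{eq: GlivCant} of the empirical center-outward distribution function. I would treat ${\hat K}\n_{\Yb/\Xb\pm}$ in detail: the case of ${\hat L}\n_{\Xb\pm}$ is identical with $\Yb$ replaced by $\Xb$, and that of ${\hat L}\n_{\Yb/\Xb\pm}$ is identical with $\Fb_{\Xb\pm}(\Xb_i)$ replaced by $\Fb_{\Xb\pm}(\Yb_i)$ (the estimator then uses $\bar\Fb\n_{\Xb\pm}(\Yb_i)$, for which~\eqref{eq: GlivCant} still applies, being a supremum over all of $\reals^d$). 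Using that $\Fb\n_{\Xb\pm}$ coincides with $\bar\Fb\n_{\Xb\pm}$ at the observations $\Xb_i$, I would write ${\hat K}\n_{\Yb/\Xb\pm}(u)-K_{\Yb/\Xb\pm}(u)=A_n(u)+B_n(u)$, where the ``oracle'' term is $B_n(u)\coloneqq\frac1n\sum_{i=1}^n\Yb_i\ind_{\{\lVert\Fb_{\Xb\pm}(\Xb_i)\rVert\le u\}}-\E\bigl[\Yb\ind_{\{\lVert\Fb_{\Xb\pm}(\Xb)\rVert\le u\}}\bigr]$ and the plug-in error is $A_n(u)\coloneqq\frac1n\sum_{i=1}^n\Yb_i\bigl(\ind_{\{\lVert\bar\Fb\n_{\Xb\pm}(\Xb_i)\rVert\le u\}}-\ind_{\{\lVert\Fb_{\Xb\pm}(\Xb_i)\rVert\le u\}}\bigr)$; the goal is $\sup_{u\in[0,1]}\lVert A_n(u)\rVert\to0$ and $\sup_{u\in[0,1]}\lVert B_n(u)\rVert\to0$ a.s.

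For the oracle term, I would split each coordinate $\Yb^{(j)}$ into positive and negative parts: each partial sum $u\mapsto\frac1n\sum_i(\Yb_i^{(j)})^{\pm}\ind_{\{\lVert\Fb_{\Xb\pm}(\Xb_i)\rVert\le u\}}$ is nondecreasing in $u$ and, by the ordinary SLLN (using $\E\lVert\Yb\rVert<\infty$), converges pointwise a.s.\ to $u\mapsto\E\bigl[(\Yb^{(j)})^{\pm}\ind_{\{\lVert\Fb_{\Xb\pm}(\Xb)\rVert\le u\}}\bigr]$. Since $\Fb_{\Xb\pm}\#{\rm P}_\Xb={\rm U}_d$ has uniform radial part over $[0,1]$, the variable $\lVert\Fb_{\Xb\pm}(\Xb)\rVert$ is uniform on $[0,1]$, hence atomless, so these limits are continuous; pointwise convergence of monotone functions to a continuous limit on a compact interval being uniform (Pólya's argument), I get $\sup_u\lVert B_n(u)\rVert\to0$ a.s.

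For the plug-in error, the $i$th summand vanishes unless $\lVert\bar\Fb\n_{\Xb\pm}(\Xb_i)\rVert$ and $\lVert\Fb_{\Xb\pm}(\Xb_i)\rVert$ lie on opposite sides of $u$; as $\bigl|\lVert\bar\Fb\n_{\Xb\pm}(\Xb_i)\rVert-\lVert\Fb_{\Xb\pm}(\Xb_i)\rVert\bigr|\le\delta_n\coloneqq\sup_{\xb}\lVert\bar\Fb\n_{\Xb\pm}(\xb)-\Fb_{\Xb\pm}(\xb)\rVert\to0$ a.s.\ by~\eqref{eq: GlivCant}, this forces $\lVert A_n(u)\rVert\le\frac1n\sum_i\lVert\Yb_i\rVert\ind_{\{|\lVert\Fb_{\Xb\pm}(\Xb_i)\rVert-u|\le\delta_n\}}$. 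Fixing $\eta>0$ and using $\delta_n\le\eta$ for $n$ large, the right-hand side is bounded by $\frac1n\sum_i\lVert\Yb_i\rVert\ind_{\{|\lVert\Fb_{\Xb\pm}(\Xb_i)\rVert-u|\le\eta\}}$; writing the shell indicator as a difference of two sublevel indicators and invoking the monotone/Pólya argument, this converges uniformly in $u$ a.s.\ to $u\mapsto\nu\bigl((u-\eta,u+\eta]\bigr)$, where $\nu$ is the image of $\lVert\Yb\rVert\,\diff{\rm P}_{\Xb\Yb}$ under $(\xb,\yb)\mapsto\lVert\Fb_{\Xb\pm}(\xb)\rVert$. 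Since $\nu\ll{\rm Law}\bigl(\lVert\Fb_{\Xb\pm}(\Xb)\rVert\bigr)={\rm U}_{[0,1]}$ is a finite atomless measure on $[0,1]$, its mass on $2\eta$-intervals vanishes uniformly in $u$ as $\eta\downarrow0$ (a standard compactness argument); letting first $n\to\infty$ and then $\eta\downarrow0$ yields $\sup_u\lVert A_n(u)\rVert\to0$ a.s. For ${\hat L}\n_{\Yb/\Xb\pm}$ the same argument applies once one knows $\lVert\Fb_{\Xb\pm}(\Yb)\rVert$ is atomless, which follows from the absolute continuity of ${\rm P}_\Yb$ together with the fact (a consequence of the regularity of $\Fb_{\Xb\pm}$ recalled in Section~\ref{sec: OTQf}) that the center-outward contours of $\Xb$ are Lebesgue-null. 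I expect this uniform-in-$u$ control of the plug-in error---which genuinely rests on the atomlessness of the relevant pushforward measures and on upgrading atomlessness to a uniform estimate---to be the main obstacle.

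Finally, for part~(ii): the quantity $\bigl(n_0+n_S\lfloor u(n_R+1)\rfloor\bigr)/n$ is exactly the proportion of gridpoints of ${\mathfrak G}\n$ contained in the closed ball of radius $u$ centered at the origin. Since the empirical distribution of ${\mathfrak G}\n$ converges weakly to ${\rm U}_d$ and $\tau\mapsto\tau$ (the ${\rm U}_d$-mass of that ball) is continuous, this proportion converges to $u$ uniformly on $[\eps,1]$; hence $n/\bigl(n_0+n_S\lfloor u(n_R+1)\rfloor\bigr)\to1/u$ uniformly on $[\eps,1]$ and is eventually bounded there by $2/\eps$. Multiplying this factor by ${\hat L}\n_{\Xb\pm}(u)$ (resp.\ ${\hat L}\n_{\Yb/\Xb\pm}(u)$, ${\hat K}\n_{\Yb/\Xb\pm}(u)$), which by part~(I) converges uniformly on $[0,1]$ to $L_{\Xb\pm}(u)$ (resp.\ its analogues), and noting that $L^{\text{\tiny cond}}_{\Xb\pm}(u)=L_{\Xb\pm}(u)/u$ is bounded on $[\eps,1]$ by $\E\lVert\Xb\rVert/\eps$ (and similarly for the others), I conclude that ${\tilde L}\n_{\Xb\pm}$, ${\tilde L}\n_{\Yb/\Xb\pm}$, and ${\tilde K}\n_{\Yb/\Xb\pm}$ are uniformly strongly consistent on $[\eps,1]$ for $L^{\text{\tiny cond}}_{\Xb\pm}$, $L^{\text{\tiny cond}}_{\Yb/\Xb\pm}$, and $K^{\text{\tiny cond}}_{\Yb/\Xb\pm}$, respectively.
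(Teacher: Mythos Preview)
Your proof is correct and covers all the ingredients needed. The overall decomposition $A_n+B_n$ (plug-in error versus oracle term) matches the structure of the paper's argument, but the way you handle each piece is genuinely different.

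For the oracle term $B_n$, the paper invokes empirical-process machinery: the class $\{(x,y)\mapsto y\,\ind_{\{\lVert\Fb_{\Xb\pm}(x)\rVert\le r\}}\}_{r\in[0,1]}$ is pointwise measurable with finite bracketing number, hence Glivenko--Cantelli. You instead exploit the linear order on $[0,1]$ directly: split $\Yb$ into coordinates and sign parts, observe that each partial sum is monotone in $u$, and use P\'olya's argument (pointwise convergence of monotone functions to a continuous limit is uniform). This is more elementary and entirely self-contained---it avoids any reference to bracketing numbers---at the cost of being slightly longer.

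For the plug-in error $A_n$, the paper argues rather informally that the Glivenko--Cantelli property~\eqref{eq: GlivCant} forces the indicator difference $\ind_{\{\lVert\Fb\n_{\Xb\pm}(\Xb_i)\rVert\le u\}}-\ind_{\{\lVert\Fb_{\Xb\pm}(\Xb_i)\rVert\le u\}}$ to be $o_{\text{a.s.}}(1)$ ``for all $i$, uniformly in $u$,'' and then averages. Your shell argument is sharper here: you observe that a nonzero indicator difference forces $\lVert\Fb_{\Xb\pm}(\Xb_i)\rVert$ to lie within $\delta_n$ of $u$, bound by the mass in a $2\eta$-shell, upgrade the pointwise SLLN to uniform via P\'olya, and finally let $\eta\downarrow 0$ using that the pushforward measure $\nu$ is finite and atomless. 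This is where you correctly identify the main technical point (atomlessness of the relevant pushforward, and for ${\hat L}\n_{\Yb/\Xb\pm}$ the Lebesgue-nullity of the $\Xb$-contours), which the paper does not make explicit.

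For part~(ii) your argument and the paper's coincide: both observe that the rescaling factor $n/(n_0+n_S\lfloor u(n_R+1)\rfloor)$ converges to $1/u$ uniformly on $[\eps,1]$ and is bounded there, and then combine with part~(I).
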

\begin{proof}
(i) It follows from the Glivenko--Cantelli result \eqref{eq: GlivCant} that, for all $i$,  
 \[
 \ind_{\left\{ \lVert{\Fb}_{\Xb,\pm}^{\n} (\Xb_i)\rVert \le u \right \}} = \ind_{ \left\{ \lVert {\Fb}_{\Xb,\pm}^{ {\color{white} \n}} (\Xb_i)  \rVert \le u \right \}} + o_{\text{\rm a.s.}}(1) \]
 and  
$$
 \ind_{\left\{ \lVert \bar{\Fb}_{\Xb,\pm}^{\n} (\Yb_i)\rVert \le u \right\}} = \ind_{\left\{ \lVert \bar{\Fb}_{\Xb,\pm}^{ {\color{white} \n}} (\Yb_i)  \rVert \le u  \right\}} +  o_{\text{\rm a.s.}}(1) 
$$
as $n\to\infty$, uniformly in $u$. 
Since $\expec \lVert \Yb\rVert <\infty$ entails that $  o_{\text{\rm a.s.}}(n^{-1})  \sum_{i=1}^n\Yb_i$ is  $o_{\text{\rm a.s.}}(1) $, 
\[
 \frac1n\sum_{i=1}^n\left[\Yb_i \ind_{\left\{ \lVert\Fbpm^{\n} (\Xb_i) \rVert\le r \right \}}\right] =  \frac1n\sum_{i=1}^n\left[\Yb_i \ind_{\left\{ \lVert\Fbpm (\Xb_i) \rVert \le r  \right\}}\right] + o_{\text{\rm a.s.}}(1) .
\]
Further,  the class of functions
 $
 \{(x,y)\mapsto y \ind_{\{\Fbpm(x)\le r\}} \}_{r \in [0,1]} 
$ 
 is pointwise measurable and has a finite bracketing number. Hence, it is Glivenko--Cantelli, and 
\begin{align*}
\lim_{n \to \infty} \frac1n\sum_{i=1}^n \Yb_i \ind_{\left\{ \lVert\Fbpm^{\n} (\Xb_i) \rVert\le r \right \}} 
&= \lim_{n \to \infty} \frac1n\sum_{i=1}^n \Yb_i \ind_{\{ \lVert\Fbpm (\Xb_i) \rVert\le r  \}} \\
&= \expec \big(\Yb \ind_{\{ \lVert\Fbpm (\Xb) \rVert \le r  \}}\big),
\end{align*}
uniformly in $r$.  The claim for ${\hat K}\n_{\Yb/\Xb\pm}$ follows. 
The proof for ${\hat L}\n_{\Xb\pm}$  and ${\hat L}\n_{\Yb/\Xb\pm}$ follows along the same lines.

Turning to (ii), note that ${\tilde L}\n_{\Xb\pm}$, ${\tilde L}\n_{\Yb/\Xb\pm}$, and ${\tilde K}\n_{\Yb/\Xb\pm}$  all are obtained as a rescaling  
by~$n/(n_S \lfloor (n_R+1)u\rfloor)$ of the estimators of Proposition~\ref{prop: Consist}.  The claim follows from the fact that   these estimators   and the rescaling function are bounded and  {strongly} converge,  uniformly in $u\in [\eps, 1]$ for all $\eps >0$. 
\end{proof}

\begin{remark}
Further asymptotic results such as asymptotic distributions for the estimators in Proposition~\ref{prop: Consist}, 
 of course, are highly desirable.  
 Such results, however, are much beyond the scope of the present work. Although the study of the asymptotic behavior of (functions of) empirical optimal transport plans and potentials is a very active area of research, many challenging questions remain unsolved; 
we refer to \citet{gunsilius2018convergence, hutter2021minimax}  and the references therein for an overview of some of the results currently available. 
\end{remark}

For the estimation of the Lorenz potential functions, we will require  the  $n$-point grid~${\mathfrak G}\n$ to exhibit a specific nature. 
Factorizing $n$ into $n=n_Rn_S + n_0$ with   remainder\linebreak  term~$n_0 \le \min(n_R,n_S)$, construct~${\mathfrak G}\n$ as the outer product\footnote{ This amounts to considering the grid consisting of the union of the points of the $n_R$ spherical grids multiplied by $k/(n_R+1), 1\le k\le n_R$.} of the radial grid
\begin{equation}\label{radgrid}\{1/(n_R+1), 2/(n_R+1),\ldots,  n_R/(n_R+1)\}
\end{equation}
 over $[0,1]$ and a uniform grid over ${\mathcal S}_{d-1}$ such that the discrete uniform measure over these gridpoints converges weakly  as $n_S \to \infty$ to the uniform measure ${\rm V}_{d-1}$ on $\mathcal{S}_{d-1}$, then add~$n_0$ copies of the origin to this grid of $n_Sn_R$ points. Call this a {\it regular} grid.

For the Lorenz potential  functions of Definition~\ref{Defn42bis},  we propose the estimators
\begin{compactenum}
\item[(a)]
 $ \displaystyle{
u \mapsto \hat \Lambda\n_{\Xb\pm}(u) \coloneqq \frac1{n_S}\sum_{i \in \mathcal{S}\n_u }  \hat{\psi}\n_{\Xb \pm}( {\scriptstyle{\mathfrak{G}}}\n_i)
}$
  and 
\item[(b)]
 $ \displaystyle{
u \mapsto \hat\Lambda\n_{\Yb/\Xb\pm}(u) \coloneqq
\frac1{n_S}\sum_{i \in \mathcal{S}\n_u }  \hat{\psi}\n_{\Yb \pm}( \bar{\Fb}^{\n}_{\Yb\pm} (\Qb_{\Xb,\pm}^{(n)}({\scriptstyle{\mathfrak{G}}}\n_i)),
}$
\end{compactenum}
with $\hat{\psi}\n_{\Xb \pm}$, $\hat{\psi}\n_{\Yb \pm}$, $\bar{\Fb}^{\n}_{\Yb\pm}$, and $\Qb_{\Xb,\pm}^{(n)}$ as in Section~\ref{eq: EmpRanks} and $ \mathcal{S}\n_u\!\! \coloneqq\! \left\{\! i : \lVert {\scriptstyle{\mathfrak{G}}}\n_i \rVert\! =\! \frac{ \lfloor  (n_R+1) u \rfloor}{ n_R+1}  \!\right\}$. 


\begin{proposition}
\label{prop: EstPotUnif}
Consider a {\em regular} grid ${\mathfrak G}\n$ constructed as above. Then, 
 for any fixed~$u \in [0,1)$,  $\hat\Lambda\n_{\Yb/\Xb\pm}$ and  $ \hat\Lambda\n_{\Yb/\Xb\pm}$ are strongly consistent  estimators of~$\Lambda_{\Xb\pm}$ and~$\Lambda_{\Yb/\Xb\pm}$, respectively, uniformly over  $[0,1-\epsilon]$  for any $\epsilon >0$. 
\end{proposition}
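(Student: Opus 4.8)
\textbf{Proof plan for Proposition~\ref{prop: EstPotUnif}.}
The plan is to treat the two estimators in parallel, since $\hat\Lambda\n_{\Yb/\Xb\pm}$ reduces to the $\hat\Lambda\n_{\Xb\pm}$-type argument after composing with the (convergent) maps $\bar{\Fb}^{\n}_{\Yb\pm}$ and $\Qb_{\Xb,\pm}^{(n)}$. First I would observe that, by construction of the \emph{regular} grid, for a fixed $u\in[0,1-\eps]$ the index set $\mathcal S\n_u$ consists of exactly the $n_S$ gridpoints lying on the spherical shell of radius $r\n(u):=\lfloor(n_R+1)u\rfloor/(n_R+1)$, and these are of the form $r\n(u)\sbold\n_1,\ldots,r\n(u)\sbold\n_{n_S}$ where $\{\sbold\n_k\}$ is the spherical sub-grid whose empirical measure converges weakly to ${\rm V}_{d-1}$. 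Hence $\hat\Lambda\n_{\Xb\pm}(u)=\frac1{n_S}\sum_{k=1}^{n_S}\hat\psi\n_{\Xb\pm}(r\n(u)\sbold\n_k)$ is an average of the empirical potential over a (near-)uniform spherical design, while the target is $\Lambda_{\Xb\pm}(u)=\int_{\mathcal S_{d-1}}\psi_{\Xb\pm}(u\sbold)\,\diff{\rm V}_{d-1}(\sbold)$. So the error splits into (a) $r\n(u)\to u$, which is deterministic and $O(1/n_R)$ uniformly in $u$; (b) replacing $\hat\psi\n_{\Xb\pm}$ by $\psi_{\Xb\pm}$; and (c) replacing the spherical-grid average by the ${\rm V}_{d-1}$-integral.

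Second, for step (b) I would invoke the convergence of the empirical potentials established in \citet{del2018center} and recalled in Section~\ref{eq: EmpRanks}: the regularized empirical potentials $\bar\psi\n_{\Xb\pm}$ (equivalently $\hat\psi\n_{\Xb\pm}$, which agree with them on the relevant sets) converge to $\psi_{\Xb\pm}$, with a Glivenko--Cantelli-type uniform control on compact subsets of the open ball — here the closed ball of radius $1-\eps/2$, which contains the shell $r\n(u)\mathcal S_{d-1}$ for all $u\le 1-\eps$ once $n$ is large. This is where the restriction to $[0,1-\eps]$ is essential: the potential $\psi_{\Xb\pm}$ may blow up towards the boundary $\abs{\ub}=1$, so uniform control is only available away from it. Combining with the continuity (indeed local Lipschitz continuity on compacts) of the limiting convex potential $\psi_{\Xb\pm}$, step (a) is absorbed as well. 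Then step (c) is a standard consequence of weak convergence of the spherical design to ${\rm V}_{d-1}$ applied to the bounded continuous function $\sbold\mapsto\psi_{\Xb\pm}(u\sbold)$ on the compact sphere $\mathcal S_{d-1}$; uniformity in $u\in[0,1-\eps]$ follows because the family $\{\psi_{\Xb\pm}(u\,\cdot)\}_{u\in[0,1-\eps]}$ is equicontinuous on $\mathcal S_{d-1}$ (again by local Lipschitzness of $\psi_{\Xb\pm}$ on the closed ball of radius $1-\eps/2$) and uniformly bounded, so a single $\eps$-net argument handles all $u$ simultaneously.

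Third, for $\hat\Lambda\n_{\Yb/\Xb\pm}$ the extra ingredient is that the integrand involves $\psi_{\Yb\pm}\bigl(\Fb_{\Yb\pm}\circ\Qb_{\Xb\pm}(u\sbold)\bigr)$, estimated by $\hat\psi\n_{\Yb\pm}\bigl(\bar{\Fb}^{\n}_{\Yb\pm}(\Qb_{\Xb,\pm}^{(n)}({\scriptstyle{\mathfrak G}}\n_i))\bigr)$. I would use the Glivenko--Cantelli statement \eqref{eq: GlivCant} for $\bar\Fb\n_{\Yb\pm}\to\Fb_{\Yb\pm}$ together with the convergence of $\Qb_{\Xb,\pm}^{(n)}$ to $\Qb_{\Xb\pm}$ (on the compact shells, away from the origin and boundary, where $\Qb_{\Xb\pm}$ is a homeomorphism by \citet{Fig18}), and the uniform continuity of the relevant maps on compacts, to show the composed argument converges uniformly; then feed this into the local Lipschitz / uniform-continuity bound for $\psi_{\Yb\pm}$ on a compact subset of the open ball, and finish by the same spherical-design-to-${\rm V}_{d-1}$ averaging argument as before. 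The main obstacle I anticipate is the uniform (in $u$) control near the sphere's boundary: one must show that for $u\le 1-\eps$ the images $\Fb_{\Yb\pm}\circ\Qb_{\Xb\pm}(u\mathcal S_{d-1})$ stay inside a fixed compact subset of the open unit ball on which $\psi_{\Yb\pm}$ is finite and Lipschitz, and that the empirical versions eventually do too — this uses the homeomorphism properties and compactness of quantile contours from \citet{Fig18,delB20,del2018center}, but requires some care to make uniform over the whole range $u\in[0,1-\eps]$.
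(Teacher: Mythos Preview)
Your plan is correct and follows essentially the same route as the paper: uniform convergence of the empirical potentials on compact subsets of the open ball, combined with weak convergence of the spherical sub-grid to ${\rm V}_{d-1}$ (uniformly in $u$), then the same scheme for $\hat\Lambda\n_{\Yb/\Xb\pm}$ via Glivenko--Cantelli for $\bar\Fb\n_{\Yb\pm}$ and uniform convergence of $\Qb^{(n)}_{\Xb\pm}$. The paper is only more explicit about two technical sources you leave implicit: the upgrade from pointwise to uniform convergence of the empirical potentials is obtained via equicontinuity (the potentials inherit the modulus of continuity of the squared-distance cost, \citealp[Theorem~1.52]{santambrogio2015optimal}), and the uniform convergence of $\Qb^{(n)}_{\Xb\pm}$ on $\overline{(1-\eps)\ball_d}$ is taken from \citet[Theorem~1.1]{segers2022graphical} rather than from \citet{Fig18}. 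Your flagged ``obstacle'' about the composite image staying inside a fixed compact of the open ball is a point the paper glosses over, so your extra care there is an improvement rather than a gap.
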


\begin{proof}
We actually only need to prove the second part of the claim since, for any fixed~$u<1$, there exists an $\epsilon>0$ such that $u \in [0,1-\epsilon]$.

We start by proving that the empirical potentials a.s.\ converge  to their population counterparts uniformly on the closed ball $\widebar{(1-\epsilon)\ball^d}$.  
The claim then follows from the weak convergence to ${\rm U}_d$ of the empirical distribution over the regular grid ${\mathfrak G}\n$.

By the almost sure convergence of empirical measures (see, e.g., \cite{varadarajan58}), 
 the sequences  $\prob_{\Xb}\n(\omega)$ of empirical measures over $\Xb_1,\ldots,\Xb_n$ weakly converges to ${\rm P}_\Xb$ for any~$\omega \in \Omega_0 \subseteq \Omega$ where $\Omega_0$ has probability one.  Denoting by ${\rm P}\n_{{\mathfrak G}}$ the empirical distribution over ${\mathfrak G}\n$, ${\rm P}\n_{{\mathfrak G}}$ similarly weakly converges to ${\rm U}_d$. 
Using \citet[Theorem~5.20]{villani2009optimal} and the uniqueness  of the optimal transport plan  for $\prob_\Xb\in {\mathcal P}_d$, it follows from a subsequence argument that the sequence of optimal transport plans from~$\prob_{\Xb}\n(\omega)$ to~${\rm P}\n_{{\mathfrak G}}$  converges weakly to the optimal transport plan from~$\prob_\Xb$ to $\rm{U}_d$.  
As  explained in \citet[proof of Proposition~3.3]{del2018center}, one can deduce from that weak convergence and~\citet[Theorem~2.8]{del2019central} the  pointwise convergence, for each $\omega \in \Omega_0$, of the corresponding empirical potentials. Now---see the proof of Theorem~1.52 in \citet{santambrogio2015optimal}---the empirical optimal potentials have the same modulus of continuity as the cost function, here the squared Euclidean distance. Since a sequence of real-valued equicontinuous functions converging pointwise on a compact set converges uniformly over that set,  the above convergence of potentials is uniform in ${\bf u}\in \widebar{(1-\epsilon)\ball^d}$.

Since  the grids ${\mathfrak G}\n$ are constructed as the product of a radial grid and a uniform grid over ${\mathcal S}_{d-1}$, both converging in distribution to  uniform measures, over $[0,1]$ and  $\mathcal{S}_{d-1}$, respectively,   the measures 
$
\frac1{n_S}\sum_{i \in \mathcal{S}\n_u } \delta_{\scriptstyle{\mathfrak{G}}\n_i}
$ 
converge weakly to  the uniform measure ${\rm V}_{d-1}^u$ on $u{\mathcal S}_{d-1}$ uniformly in $u$. To see this, note that~$
\big\vert [ (n_R+1)u] -(n_R+1)u \big\vert /(n_R+1)
$ converges to zero uniformly in $u$.
This uniform weak convergence,  combined with the uniform convergence of the potential over the closed ball $\widebar{(1-\epsilon)\ball^d}$, ensures that the result holds for~$\hat\Lambda\n_{\Xb\pm}$.

The result for $\hat\Lambda\n_{\Yb/\Xb\pm}$  follows essentially from the same argument, exploiting further the continuity of $\psi_{\Yb \pm}$ and $\Fb_{\Yb\pm}$ as well as the Glivenko--Cantelli result from Equation~\eqref{eq: GlivCant}. 
Further, one also requires the uniform convergence of $\Qb_{\Xb,\pm}^{(n)}$ over  $\widebar{(1-\epsilon)\ball^d}$: this  follows from \citet[Theorem~1.1]{segers2022graphical}.
\end{proof}

Interestingly, this result would not necessarily hold for arbitrary grids converging weakly to ${\rm U}_d$, and the special ``product structure" form of the grid is required here as one needs the empirical measure underlying the empirical contour of level $u$ to converge weakly to the uniform on $u\sphere_{d-1}$.


\subsection{Estimation of  multivariate Gini  and Pietra indices} 

Define
\begin{align}\nonumber
&\hat{\mathcal L}_{\Xb \pm}(u)\coloneqq {\hat L\n_{\Xb\pm}(u)} \oslash \frac1n \sum_{i=1}^n \mathbf{X}_i,  
&\hat{\mathcal L}_{\Xb \pm}^{\text{\tiny cond}}(u) \coloneqq {\tilde L\n_{\Xb\pm}(u)} \oslash \frac1n \sum_{i=1}^n \mathbf{X}_i, \qquad\\
&\hat{\mathcal L}_{\Yb/\Xb \pm}(u) \coloneqq {\hat L\n_{\Yb/\Xb\pm}(u)} \oslash \frac1n \sum_{i=1}^n \mathbf{Y}_i,   
&\hat{\mathcal L}_{\Yb/\Xb \pm}^{\text{\tiny cond}}(u) \coloneqq {\tilde L\n_{\Yb/\Xb\pm}(u)} \oslash \frac1n \sum_{i=1}^n \mathbf{Y}_i, 
\nonumber \\
&\hat{\mathcal K}_{\Yb/\Xb \pm}(u) \coloneqq {\hat K\n_{\Yb/\Xb\pm}(u)} \oslash \frac1n \sum_{i=1}^n \mathbf{Y}_i, \quad \text{and }  
&\hat{\mathcal K}_{\Yb/\Xb \pm}^{\text{\tiny cond}}(u) \coloneqq {\tilde K\n_{\Yb/\Xb\pm}(u)} \oslash \frac1n \sum_{i=1}^n \mathbf{Y}_i.
\vspace{-4mm}\label{**}\end{align}
where $\oslash$ denotes Kronecker/componentwise division. The Gini  and Pietra indices  of Definition~\ref{c-oLor2'} easily can be estimated by repla\-cing, in equa\-tions~\eqref{eq: coDefGinimult}--\eqref{eq: coDefPietramult''},~${\mathcal L}_{\Xb \pm}(u)$, ${\mathcal L}_{\Yb/\Xb \pm}(u)$, and ${\mathcal K}_{\Yb/\Xb \pm}(u)$ 
with  $\hat{\mathcal L}_{\Xb \pm}(u)$,~$\hat{\mathcal L}_{\Yb/\Xb \pm}(u)$, and~$\hat{\mathcal K}_{\Yb/\Xb \pm}(u)$. 
\begin{proposition} 
For any $u \in (0,1)$, the estimators of the indices defined in~\eqref{eq: coDefGinimult}--\eqref{eq: coDefPietramult''} 
obtained by substituting,  in these definitions, ${\mathcal L}_{\Xb \pm}(u)$, ${\mathcal L}_{\Yb/\Xb \pm}(u)$, and ${\mathcal K}_{\Yb/\Xb \pm}(u)$ with their empirical counterparts  
are strongly consistent.  
\end{proposition}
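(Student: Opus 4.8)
The plan is to deduce the result by lifting the uniform strong consistency established in Proposition~\ref{prop: Consist}(I) through the two functionals $g\mapsto\int_0^1\|(u,\dots,u)'-g(u)\|\,\diff u$ and $g\mapsto\sup_{u\in[0,1]}\|(u,\dots,u)'-g(u)\|$ that define the six indices \eqref{eq: coDefGinimult}--\eqref{eq: coDefPietramult''}. Both functionals are $1$-Lipschitz with respect to the supremum norm on bounded $\reals^d$-valued maps on $[0,1]$, so once the empirical \emph{relative} curves are shown to converge uniformly and almost surely to their population counterparts, a.s.\ convergence of the plug-in indices follows at once.

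First I would establish that uniform a.s.\ convergence of the relative curves. By the strong law of large numbers applied coordinatewise---legitimate since $\expec\|\Xb\|<\infty$ and $\expec\|\Yb\|<\infty$ make all coordinate means finite---$\frac1n\sum_{i=1}^n\Xb_i\to\boldsymbol\mu_\Xb$ and $\frac1n\sum_{i=1}^n\Yb_i\to\boldsymbol\mu_\Yb$ almost surely. Under the standing assumption of Section~\ref{SecGini}, every coordinate of $\boldsymbol\mu_\Xb$ (resp.\ $\boldsymbol\mu_\Yb$) is strictly positive, hence these normalizing vectors are, almost surely, eventually bounded away from $\mathbf 0$ coordinatewise; moreover $L_{\Xb\pm}(1)=\boldsymbol\mu_\Xb$, $L_{\Yb/\Xb\pm}(1)=\boldsymbol\mu_\Yb$, and $K_{\Yb/\Xb\pm}(1)=\boldsymbol\mu_\Yb$, because $\mathbb C_{\Xb\pm}(1)$ carries probability one under both ${\rm P}_\Xb$ and ${\rm P}_\Yb$. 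Combining these facts with the uniform strong consistency of $\hat L\n_{\Xb\pm}$, $\hat L\n_{\Yb/\Xb\pm}$ and $\hat K\n_{\Yb/\Xb\pm}$ from Proposition~\ref{prop: Consist}(I), and using that $\hat{\mathcal L}_{\Xb\pm}(u)=\hat L\n_{\Xb\pm}(u)\oslash\frac1n\sum_i\Xb_i$ (and likewise for the other two) is a ratio of a uniformly convergent, bounded numerator and an a.s.-convergent, coordinatewise-nonvanishing denominator, one obtains
\[
\sup_{u\in[0,1]}\big\|\hat{\mathcal L}_{\Xb\pm}(u)-{\mathcal L}_{\Xb\pm}(u)\big\|\longrightarrow 0\qquad\text{almost surely,}
\]
and the analogous statement for $\hat{\mathcal L}_{\Yb/\Xb\pm}$ and $\hat{\mathcal K}_{\Yb/\Xb\pm}$.

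Next I would transfer this to the indices. Fixing the $\Xb$-case (the others are identical), set $r_n(u):=\|(u,\dots,u)'-\hat{\mathcal L}_{\Xb\pm}(u)\|$ and $r(u):=\|(u,\dots,u)'-{\mathcal L}_{\Xb\pm}(u)\|$. The reverse triangle inequality gives $\sup_{u\in[0,1]}|r_n(u)-r(u)|\le\sup_{u\in[0,1]}\|\hat{\mathcal L}_{\Xb\pm}(u)-{\mathcal L}_{\Xb\pm}(u)\|$, which tends to $0$ a.s.\ by the previous step; hence $\big|\int_0^1 r_n-\int_0^1 r\big|$ and $\big|\sup_{[0,1]}r_n-\sup_{[0,1]}r\big|$ are both at most $\sup_{[0,1]}|r_n-r|\to0$ a.s., i.e.\ the plug-in versions of $G_{\Xb\pm}$ and $P_{\Xb\pm}$ converge a.s.\ to their population values. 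I would add a sentence noting that, for each $\omega$, the map $u\mapsto\hat L\n_{\Xb\pm}(u)$ is a right-continuous step function with jumps only at the finite set $\{\|\Fb\n_{\Xb\pm}(\Xb_i)\|:1\le i\le n\}$, so $r_n$ is piecewise continuous with finitely many pieces, its supremum over $[0,1]$ is attained, and the plug-in indices are bona fide random variables. Repeating the same three lines with $\hat{\mathcal L}_{\Yb/\Xb\pm}$ and $\hat{\mathcal K}_{\Yb/\Xb\pm}$ in place of $\hat{\mathcal L}_{\Xb\pm}$ disposes of $G_{\Yb/\Xb\pm}$, $P_{\Yb/\Xb\pm}$, $GK_{\Yb/\Xb\pm}$ and $PK_{\Yb/\Xb\pm}$.

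I do not anticipate any real obstacle: the content of the statement is simply that consistency propagates through the Lipschitz functionals defining the indices, so the only steps deserving a word of care are (a) the a.s.\ nondegeneracy of the random normalizations $\frac1n\sum\Xb_i$ and $\frac1n\sum\Yb_i$, which is precisely what the positivity-and-finite-mean hypothesis on the components of $\Xb$ (resp.\ $\Yb$) was imposed to guarantee, and (b) the observation that the restriction to $u\in[\eps,1]$ appearing in Proposition~\ref{prop: Consist}(ii) is immaterial here, because \eqref{eq: coDefGinimult}--\eqref{eq: coDefPietramult''} are built from the \emph{unconditional} relative curves, for which Proposition~\ref{prop: Consist}(I) already furnishes uniform consistency on the whole of $[0,1]$.
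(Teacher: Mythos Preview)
Your proposal is correct and follows essentially the same route as the paper's own proof, which simply invokes Proposition~\ref{prop: Consist}, the strong law of large numbers, Slutsky's theorem, and the continuous mapping theorem. You have just spelled out in detail what the paper compresses into one sentence: the Lipschitz bounds via the reverse triangle inequality are a concrete instance of the continuous mapping argument, and your remarks on the nondegeneracy of the normalizing means and the irrelevance of the $[\eps,1]$ restriction are exactly the points that make the one-line proof legitimate.
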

\begin{proof}
The proof follows directly from Proposition~\ref{prop: Consist}, the strong law of large numbers, Slutsky's theorem, and the continuous mapping theorem.
\end{proof}

Now, let us turn to the estimation of the concepts of   Gini and Pietra potential concentration indices proposed in Definition~\ref{dfn: GinPot}. 
\begin{proposition} 
\label{prop: GinPotEst}
Denote by $\hat{G}^\Lambda_{\Xb\pm}$, $\hat{P}^\Lambda_{\Xb\pm}$, $\hat{G}^\Lambda_{\Yb/\Xb\pm}$, and $\hat{P}^\Lambda_{\Yb/\Xb\pm}$  the empirical counterparts of the potential concentration indices of Definition~\ref{dfn: GinPot}, obtained by plugging-in 
$\hat{\Lambda}_{\Xb\pm}$ and $\hat{\Lambda}_{\Yb/\Xb\pm}$ instead of ${\Lambda}_{\Xb\pm}$ and~${\Lambda}_{\Yb/\Xb\pm}$ respectively, in~\eqref{eq: coGini}--\eqref{eq: coDefLCPot'}. 
Then, assuming that $\supp(\prob_\Xb)$ is compact, 
$\hat{G}^\Lambda_{\Xb\pm}$ and~$\hat{P}^\Lambda_{\Xb\pm}$ are strongly consistent estimators of ${G}^\Lambda_{\Xb\pm}$ and ${P}^\Lambda_{\Xb\pm}$, respectively.
If, furthermore,   $\supp(\prob_\Yb)$ is compact,  $\hat{G}^\Lambda_{\Yb/\Xb\pm}$ and $\hat{P}^\Lambda_{\Yb/\Xb\pm}$ are strongly consistent estimators of ${G}^\Lambda_{\Yb/\Xb\pm}$ and ${P}^\Lambda_{\Yb/\Xb\pm}$, respectively. 
\end{proposition}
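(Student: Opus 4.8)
The plan is to upgrade Proposition~\ref{prop: EstPotUnif}, whose conclusion is uniform strong consistency of $\hat\Lambda\n_{\Xb\pm}$ and $\hat\Lambda\n_{\Yb/\Xb\pm}$ only over $[0,1-\epsilon]$, into uniform strong consistency over the \emph{whole} interval $[0,1]$---the endpoint $u=1$ being the one entering the relative indices through the normalizations $\Lambda_{\Xb\pm}(1)$ and $\Lambda_{\Yb/\Xb\pm}(1)$---and then to feed this into a continuity argument for the two functionals $\Lambda\mapsto 2\int_0^1[u-\Lambda(u)/\Lambda(1)]\,\diff u$ and $\Lambda\mapsto\sup_{0\le u\le 1}[u-\Lambda(u)/\Lambda(1)]$, which depend continuously on $\Lambda$ in supremum norm as long as the $\Lambda$'s are uniformly bounded and $\Lambda(1)$ stays bounded away from zero.

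The key step is to control the empirical potentials uniformly up to the boundary of the unit ball, which is exactly where compactness of $\supp(\prob_\Xb)$ enters. Set $K\coloneqq\supp(\prob_\Xb)$ and $R\coloneqq\sup_{\grx\in K}\lVert\grx\rVert<\infty$. Since $\nabla\psi_{\Xb\pm}=\Qb_{\Xb\pm}$ takes its values in $K$, the population potential $\psi_{\Xb\pm}$ is $R$-Lipschitz, hence continuous, on $\overline{\mathbb{S}}_d$, so $\Lambda_{\Xb\pm}$ is continuous on $[0,1]$. Likewise, almost surely every sample point $\Xb_i\n$ lies in $K$, so each $\hat{\psi}\n_{\Xb\pm}(\cdot)=\max_{1\le j\le n}(\grx_j\n\cdot(\cdot)-\psi_{j,\pm})$ is $R$-Lipschitz, whence $\{\hat{\psi}\n_{\Xb\pm}\}_n$ is equicontinuous on $\overline{\mathbb{S}}_d$. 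The proof of Proposition~\ref{prop: EstPotUnif} already yields, almost surely, uniform convergence $\hat{\psi}\n_{\Xb\pm}\to\psi_{\Xb\pm}$ on $\widebar{(1-\epsilon)\ball^d}$ for every $\epsilon>0$, hence pointwise convergence on the open ball $\mathbb{S}_d$, which is dense in $\overline{\mathbb{S}}_d$; an Arzel\`a--Ascoli argument (equicontinuity plus pointwise convergence on a dense subset of a compact set) then promotes this to uniform convergence on $\overline{\mathbb{S}}_d$. Combining this with the uniform-in-$u$ weak convergence of the shell measures $n_S^{-1}\sum_{i\in\mathcal{S}\n_u}\delta_{\scriptstyle{\mathfrak{G}}\n_i}$ to ${\rm V}_{d-1}^u$---established in the proof of Proposition~\ref{prop: EstPotUnif} from the fact that the shell radius $\lfloor(n_R+1)u\rfloor/(n_R+1)$ stays within $1/(n_R+1)$ of $u$---and the Lipschitz continuity of $\psi_{\Xb\pm}$, one obtains $\sup_{0\le u\le 1}\lvert\hat\Lambda\n_{\Xb\pm}(u)-\Lambda_{\Xb\pm}(u)\rvert\to 0$ almost surely; in particular $\hat\Lambda\n_{\Xb\pm}(1)\to\Lambda_{\Xb\pm}(1)$, reading $\hat\Lambda\n_{\Xb\pm}(1)$ off as the value on the outermost shell $u=n_R/(n_R+1)\uparrow 1$.

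To close the argument for $\hat{G}^\Lambda_{\Xb\pm}$ and $\hat{P}^\Lambda_{\Xb\pm}$ I would first observe that $\Lambda_{\Xb\pm}(1)>0$ unless $\Xb$ is degenerate: by convexity of $\psi_{\Xb\pm}$ with $\psi_{\Xb\pm}(\boldsymbol{0})=0$, one has $\Lambda_{\Xb\pm}(u)=\tfrac12\E[\psi_{\Xb\pm}(u\Sb)+\psi_{\Xb\pm}(-u\Sb)]$, which is nonnegative, nondecreasing in $u$, and strictly positive at $u=1$ unless $\psi_{\Xb\pm}$ is affine---the degenerate case, in which all these indices vanish by convention and there is nothing to prove. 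Hence $\hat\Lambda\n_{\Xb\pm}(1)$ converges to a strictly positive limit, and a Slutsky-type argument gives that the integrands $u\mapsto u-\hat\Lambda\n_{\Xb\pm}(u)/\hat\Lambda\n_{\Xb\pm}(1)$ converge uniformly on $[0,1]$ to $u\mapsto u-\Lambda_{\Xb\pm}(u)/\Lambda_{\Xb\pm}(1)$ and are eventually uniformly bounded (by $R$ together with $\inf_n\hat\Lambda\n_{\Xb\pm}(1)>0$, a.s.). Bounded convergence then gives $\hat{G}^\Lambda_{\Xb\pm}\to G^\Lambda_{\Xb\pm}$ a.s., and continuity of $\sup_{0\le u\le 1}(\cdot)$ for the supremum norm gives $\hat{P}^\Lambda_{\Xb\pm}\to P^\Lambda_{\Xb\pm}$ a.s. For the ``$\Yb$ with respect to $\Xb$'' indices I would rerun the same scheme on $\hat\Lambda\n_{\Yb/\Xb\pm}(u)=n_S^{-1}\sum_{i\in\mathcal{S}\n_u}\hat{\psi}\n_{\Yb\pm}\big(\bar{\Fb}\n_{\Yb\pm}(\Qb_{\Xb\pm}\n({\scriptstyle{\mathfrak{G}}}\n_i))\big)$, now using compactness of both $\supp(\prob_\Xb)$ and $\supp(\prob_\Yb)$: $\hat{\psi}\n_{\Yb\pm}$ is eventually uniformly Lipschitz and is evaluated on the closed ball (since $\bar{\Fb}\n_{\Yb\pm}$ and $\Fb_{\Yb\pm}$ map into it), $\bar{\Fb}\n_{\Yb\pm}\to\Fb_{\Yb\pm}$ uniformly by the Glivenko--Cantelli property~\eqref{eq: GlivCant}, and $\Qb_{\Xb\pm}\n\to\Qb_{\Xb\pm}$ uniformly on $\widebar{(1-\epsilon)\ball^d}$ by \citet[Theorem~1.1]{segers2022graphical}; composing these (all the limit maps being continuous, as already used in the proof of Proposition~\ref{prop: EstPotUnif}) and invoking equicontinuity as above yields uniform a.s.\ convergence of the composite potentials, hence of $\hat\Lambda\n_{\Yb/\Xb\pm}$ over $[0,1]$, and the convergence of $\hat{G}^\Lambda_{\Yb/\Xb\pm}$ and $\hat{P}^\Lambda_{\Yb/\Xb\pm}$ follows exactly as before.

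The hard part is precisely the first step: Proposition~\ref{prop: EstPotUnif} deliberately stops short of $u=1$, yet the relative indices normalize by $\Lambda(1)$ and the Gini/Pietra functionals integrate or take a supremum over the \emph{closed} interval, so one genuinely needs boundary-uniform control of the empirical potentials. The compact-support hypothesis is exactly what supplies it---it makes both the population and (eventually, almost surely) the empirical potentials uniformly Lipschitz on $\overline{\mathbb{S}}_d$, upgrading the interior pointwise convergence of Proposition~\ref{prop: EstPotUnif} to uniform convergence on the closed ball via equicontinuity; everything downstream is a routine Slutsky / continuous-mapping / bounded-convergence argument.
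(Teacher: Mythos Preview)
Your proposal is correct and follows essentially the same approach as the paper: use compactness of the support(s) to upgrade the uniform convergence of empirical potentials in Proposition~\ref{prop: EstPotUnif} from $[0,1-\epsilon]$ to all of $[0,1]$, then conclude via Slutsky and the continuous mapping theorem. The only difference is presentational---where the paper invokes \citet[Theorem~1.52]{santambrogio2015optimal} (and \citet[Theorem~1.1]{segers2022graphical} for the quantile function) directly for boundary-uniform convergence, you spell this out via the $R$-Lipschitz bound and an Arzel\`a--Ascoli argument, and you also make the positivity of $\Lambda_{\Xb\pm}(1)$ explicit, which the paper leaves implicit.
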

\begin{remark}
The crucial  point,  when proving the consistency of Gini and Pietra potential concentration indices, 
 is   the  uniform convergence of the empirical potentials  to the actual ones, which in turn guarantees that $\hat{\Lambda}_{\Xb\pm}(u)$ and~$\hat{\Lambda}_{\Yb/\Xb\pm}(u)$   converge to $\Lambda_{\Xb\pm} (u)$ and ${\Lambda}_{\Yb/\Xb\pm}(u)$  in a sufficiently strong sense for $\hat{G}^\Lambda_{\Xb\pm}$ and~$\hat{P}^\Lambda_{\Xb\pm}$ (respectively, $\hat{G}^\Lambda_{\Yb/\Xb\pm}$ and~$\hat{P}^\Lambda_{\Yb/\Xb\pm}$) to be consistent. The assumption of a compactly supported $\Xb$ (respectively, compactly supported $\Xb$ and~$\Yb$) entails such a convergence. Compactness here  is sufficient but certainly not necessary, and we expect the results of Proposition~\ref{prop: GinPotEst} to hold in more general situations. Relaxing compactness, however, would require a better understanding of the   behavior of  empirical potentials in the neighborhood of the unit sphere. This behavior is related to the boundary behavior of solutions of  the Monge--Amp\`ere equation---a highly nontrivial  problem in the noncompact case, see \cite{segers2022graphical}.
\end{remark}
\begin{proof}[Proof of Proposition~\ref{prop: GinPotEst}]
Owing to compactness, almost sure uniform convergence of the poten\-tials~$\psi_{\Xb\pm}$ and $\psi_{\Yb\pm}$ follows from Theorem~1.52 in \citet{santambrogio2015optimal}. The uniform convergence of the empirical quantile function follows again from  from \citet[Theorem~1.1]{segers2022graphical}.
Therefore, because of the more stringent compactness assumption, the convergence of the estimators of Proposition~\ref{prop: EstPotUnif} holds uniformly in $u \in [0,1]$. The claim then follows from a trivial application of Slutsky's lemma and the continuous mapping theorem.
\end{proof}

Finally consider the problem of estimating the Koshevoy--Mosler multivariate center-outward Gini index $G^\mathrm{KM}_{\Xb\pm}$ introduced in Definition~\ref{def: GKM}. Denote by $\Delta$ the double integral appearing in the numerator of \eqref{eq: GKM}. 
A natural estimator of $\Delta$ is 
\[
\hat{\Delta}\n\coloneqq  \frac{2}{n(n-1)}\sum_{1\leq i < j\leq n } \lVert \mathbf{Q}_{\Xb\pm}^{(n)}({\scriptstyle{\mathfrak{G}}}\n_i)-  \mathbf{Q}_{\Xb\pm}^{(n)}({\scriptstyle{\mathfrak{G}}}\n_j )\rVert =\frac{2}{n(n-1)}\sum_{1\leq i < j \leq n} \lVert \Xb_i -\Xb_j\rVert.
\]
This quantity is a U-statistic with kernel $h(\grx,\gry)= \lVert \grx -\gry \rVert$. The following result thus readily follows from, e.g.,   Theorem~12.3 in \cite{vaart_1998}.
\begin{proposition}
Let $\Xb_1, \Xb_2$, and $\Xb_2'$ be i.i.d.\ random variables with common distribution~$\prob_\Xb$.
If $\expec \lVert \Xb \rVert^2 < \infty$, then 
\[
\sqrt{n}\big(\hat{\Delta}\n- \expec \lVert \Xb_1 - \Xb_2 \rVert\big) \rightsquigarrow \normal(0, \sigma^2) \quad \text{as } n \to \infty,
\]
where $\sigma^2=4 \cov( \lVert \Xb_1 - \Xb_2 \rVert , \lVert \Xb_1 - \Xb_2' \rVert)$.
\end{proposition}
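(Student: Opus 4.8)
The plan is to recognize $\hat{\Delta}\n$ as a U-statistic of order two and invoke the classical asymptotic theory for non-degenerate U-statistics, as hinted right before the statement (Theorem~12.3 in \cite{vaart_1998}). First I would record the elementary but important fact that, by the defining property $\mathbf{Q}_{\Xb\pm}^{(n)}({\scriptstyle{\mathfrak{G}}}\n_{\pi^*(i)})=\Xb_i\n$ from Section~\ref{eq: EmpRanks}, the multiset $\{\mathbf{Q}_{\Xb\pm}^{(n)}({\scriptstyle{\mathfrak{G}}}\n_i):1\le i\le n\}$ is exactly the sample $\{\Xb_i\n:1\le i\le n\}$; since the double sum over unordered pairs is invariant under relabelling, $\hat{\Delta}\n=\tfrac{2}{n(n-1)}\sum_{i<j}\lVert\Xb_i-\Xb_j\rVert$ is literally a U-statistic with symmetric kernel $h(\grx,\gry)=\lVert\grx-\gry\rVert$ and estimand $\theta\coloneqq\expec\lVert\Xb_1-\Xb_2\rVert$, carrying no fluctuation from the estimation of $\Qb_{\Xb\pm}$.

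Next I would verify the single hypothesis the U-statistic CLT requires, namely square-integrability of the kernel: by the triangle inequality and independence, $\expec\,h(\Xb_1,\Xb_2)^2\le 2\,\expec\lVert\Xb_1\rVert^2+2\,\expec\lVert\Xb_2\rVert^2=4\,\expec\lVert\Xb\rVert^2<\infty$. With this in hand, the Hoeffding/Hájek projection argument gives $\sqrt n\bigl(\hat{\Delta}\n-\theta\bigr)=\tfrac{2}{\sqrt n}\sum_{i=1}^n\bigl(h_1(\Xb_i)-\theta\bigr)+o_{\mathrm P}(1)$, where $h_1(\grx)\coloneqq\expec\lVert\grx-\Xb_2\rVert$ is the first-order projection kernel; the ordinary CLT for the i.i.d.\ average on the right, combined with Slutsky's lemma, then delivers $\sqrt n\bigl(\hat{\Delta}\n-\theta\bigr)\rightsquigarrow\normal\bigl(0,4\Var(h_1(\Xb_1))\bigr)$.

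The only step that is not purely mechanical is matching $\Var(h_1(\Xb_1))$ with $\cov\bigl(\lVert\Xb_1-\Xb_2\rVert,\lVert\Xb_1-\Xb_2'\rVert\bigr)$. Conditionally on $\Xb_1$, the copies $\Xb_2,\Xb_2'$ are independent and $\prob_\Xb$-distributed, so $\expec\bigl[\lVert\Xb_1-\Xb_2\rVert\,\lVert\Xb_1-\Xb_2'\rVert\,\big\vert\,\Xb_1\bigr]=h_1(\Xb_1)^2$, while each factor has conditional expectation $h_1(\Xb_1)$; taking expectations yields $\cov\bigl(\lVert\Xb_1-\Xb_2\rVert,\lVert\Xb_1-\Xb_2'\rVert\bigr)=\expec[h_1(\Xb_1)^2]-(\expec h_1(\Xb_1))^2=\Var(h_1(\Xb_1))$, which is $\sigma^2/4$.

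I do not anticipate a genuine obstacle here --- this is a textbook application of U-statistic theory once the two bookkeeping points are settled. The mild subtleties to keep in mind are: (i) the permutation-invariance remark, so that the limit law is a property of the sample rather than of the auxiliary grid ${\mathfrak G}\n$; and (ii) the possibly degenerate case $\Var(h_1(\Xb_1))=0$ (for instance $\Xb$ almost surely constant), in which the statement still holds, trivially, with a degenerate Gaussian limit and no further work. One could equivalently argue through the full Hoeffding decomposition, checking that the second-order degenerate term contributes only $o_{\mathrm P}(n^{-1/2})$, but invoking the cited theorem directly is cleaner.
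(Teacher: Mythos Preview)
Your proposal is correct and follows exactly the approach the paper takes: the paper simply notes that $\hat{\Delta}\n$ is a U-statistic with kernel $h(\grx,\gry)=\lVert\grx-\gry\rVert$ and invokes Theorem~12.3 in \cite{vaart_1998} without further detail. Your write-up supplies the bookkeeping (permutation invariance, square-integrability, identification of the asymptotic variance) that the paper leaves implicit, so it is a strictly more detailed version of the same argument.
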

Based on this, for any consistent estimator $\hat{\kappa}$ of $\kappa \coloneqq  \int \lVert {\bf Q}_\pm(\gru) \rVert  \diff \mathrm{U}_d(\gru)$,  it follows from Slutsky's Lemma that 
\[
\sqrt{n}\left(\frac{\hat{\Delta}\n}{\hat{\kappa}} - G^\mathrm{KM}_{\Xb\pm} \right) \rightsquigarrow \normal\left(0, \frac{\sigma^2}{\kappa^2}\right) \quad \text{as } n \to \infty.
\]
A natural choice for $\hat{\kappa}$ is $\sum_{i=1}^n \lVert \Xb_i \rVert /n$.

\section{Empirical examples}
\subsection{Socio-economic inequalities and well-being }
\label{sec: App}
In this entire section, the curves we are depicting are piecewise constant over the interval~$u\in [0, n_R/(n_R +1)]$. For the sake of readability, however, we  are showing  linear interpolations of their points of discontinuity. 
For the same reason, $u$ is rescaled by $(n_R+1)/n_R$, so that the curves reach the top corner of the unit square or cube. 
\subsubsection{The dataset}
The relation between socio-economic inequalities and well-being is a fundamental topic in economics  and social sciences.  \citet{sarabia2013modeling} recently proposed a study of well-being across 132 countries  using the generalized Lorenz curve introduced in~~\citep{arnold2018majorization}, see~\eqref{eq: ArnoldBiv}. 

Well-being, of course, is the resultant of a number of socio-economic variables. Sarabia and Jorda are basing their  analysis  on  education,  income, and health, as measured by the indicators used by the United Nations  in the construction of the Human Development Index \citep[see, for instance, the latest report by the][]{UNDP}: Education~($X_1$) is based on the average between the Mean Years of Schooling   and the Expected Years of Schooling indices, Income~($X_2$) on a transformations of Gross National Income per capita\footnote{Gross National Income per capita is measured in Purchasing Power Parity international dollars. The Index we consider is further slightly different from that proposed by the UN. We did not apply the logarithmic transformation as it modifies the distribution of interest and, in particular, changes the tails behavior.}, and Health ($Y$) on  life expectancy at birth. Each of these variables  takes values between zero and one by construction.

In their application, \citet{sarabia2013modeling} consider, in dimension two, a parametric version of their Lorenz curve concept~\eqref{eq: ArnoldBiv},  assuming bivariate  Sarmanov--Lee distributions (a form of multivariate beta distribution, with beta  marginals). This requires,  in dimension two, the estimation of five parameters---two per marginal plus the dependence parameter. They repeat their analysis for each five-year period between 1980 and 2000. As an illustration  of our concepts,  we apply our methods (which do not require the somewhat arbitrary choice of a specific distribution, nor the estimation of its parameters)  to the most recent versions of the same three variables, collected  from the  United Nations website. 
 From that dataset, we only retained the $144$ countries for which data were available since 1990 and (in line with the quantisation  suggested in Table~1 of \citet{hallin2021finite})  factorized~$n=144$ into $n=n_Rn_S + n_0$ with $n_R=8$, $n_S=18$, and~$n_0=0$. 
 
The assumption of an~i.i.d.~sample  seems hard to justify for this dataset, though, and we feel that the analysis below, as well as the one by  Sarabia and Jorda (despite their Sarmanov--Lee distributional assumption),  should be taken from a purely descriptive perspective.

\subsubsection{Center-outward quantiles}
As a first step, let us show how the proposed center-outward approach determines empirical quantiles. 
Figure~\ref{fig: visualisation} displays the quantile  obtained from the empirical optimal transport map between the observations and the spherical grid. In the upper rightmost corner are Singapore, Switzerland, and Luxemburg, while the Central African Republic  is the lower leftmost point. The most central contours   consist of Egypt, Brazil or Mexico, to mention but three.  A positive interrelation between the two indices is quite visible. An interactive version of the plot enabling to select certain empirical center-outward quantile contours and providing additional information when hovering over the points is provided as digital supplemental material.

\begin{figure}[h!]
\begin{center}
\includegraphics[width=0.5\textwidth]{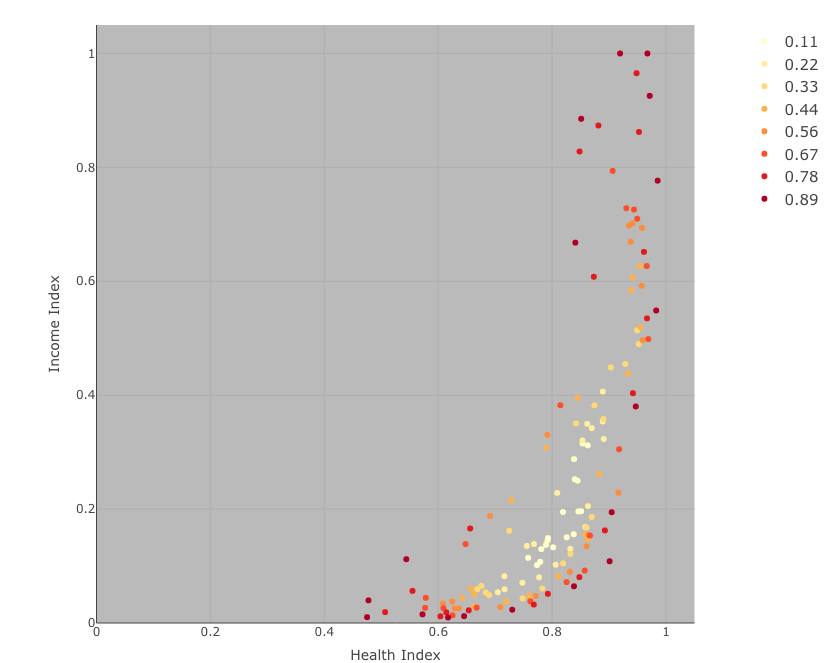}
\end{center}
\caption[]{\label{fig: visualisation} \slshape\small Center-outward quantiles for the Health (Life Expectancy)  and Income indices ($n=144$ countries in 2015). Each dot represents a country. The dots are coloured according to their center-outward quantile orders (their center-outward ranks, rescaled between~$1/(n_R+1)$ and~$n_R/(n_R+1)$). }\end{figure}

\subsubsection{Center-outward Lorenz and Kakwani  functions, Gini and Pietra coefficients.}

In this section, we are dealing with the empirical versions $\hat{\mathcal{L}}\n_{\Xb\pm}$,   $\hat{\mathcal{L}}\n_{Y/\Xb\pm}$, and    $\hat{\mathcal{K}}\n_{Y/\Xb\pm}$   of the concepts introduced in Definitions~\ref{c-oLor2} and~\ref{c-oLorCo}. 

\begin{figure}[h!]
\begin{tikzpicture}[scale=0.38]
\begin{axis} [
view = {70}{20},
legend pos=north west, 
 xmin=0, xmax=1,
    ymin=0, ymax=1,
        zmin=0, zmax=1,
xtick={0,0.5,1},
ytick={0,0.5,1},
ztick={0,0.5,1},
xlabel={$u (n_R+1)/n_R$},
ylabel={$X_1$},
zlabel={$X_2$},
scale=2,
3d box= complete*,
unit vector ratio*=2 2 2
]
 
\addplot3[color= blue] table 
{
x y   z
 0.000 0.0000000 0.00000000
 0.125 0.1292111 0.08083609
 0.250 0.2570173 0.17095984
 0.375 0.3846942 0.27025293
 0.500 0.5109293 0.38250466
 0.625 0.6359726 0.50827747
 0.750 0.7596859 0.64413090
 0.875 0.8798187 0.80917650
 1.000 1.0000000 1.00000000
};
 
 \addlegendentry{1990}
 
\addplot3[color= red] table 
{
x y   z
0.000 0.0000000 0.00000000
0.125 0.1255585 0.08475008
0.250 0.2531941 0.18034641
0.375 0.3800711 0.28945454
 0.500 0.5070930 0.40978241
 0.625 0.6332372 0.53922015
 0.750 0.7572982 0.67623918
 0.875 0.8786873 0.82455235
1.000 1.0000000 1.00000000
};
 
 \addlegendentry{2020}
 
\addplot3[color=gray]
 table 
{
x y   z
0.000 0.0000000 0.0000000
1 1.0000000 1.0000000
};
\end{axis}
\end{tikzpicture}
\begin{tikzpicture}[scale=0.55]
\begin{axis}[
    ylabel={$\hat{\mathcal{L}}_{\Xb\pm}(u)$  [$X_1$  component]},
    xlabel={$u (n_R+1)/n_R$},
    xmin=0, xmax=1,
    ymin=0, ymax=1,
    legend pos=north west,
        unit vector ratio*=1 1 
]
\addplot[
    color=blue
        ]
    coordinates {
 (0.000, 0.0000000 )
 (0.125, 0.1292111 )
 (0.250, 0.2570173 )
 (0.375, 0.3846942 )
 (0.500, 0.5109293 )
 (0.625, 0.6359726 )
 (0.750, 0.7596859 )
 (0.875, 0.8798187 )
 (1.000, 1.0000000 )
    };
 \addlegendentry{1990}
\addplot[
    color=red
        ]
    coordinates {
(0.000, 0.0000000 )
(0.125, 0.1255585 )
(0.250, 0.2531941 )
(0.375, 0.3800711 )
( 0.500, 0.5070930 )
( 0.625, 0.6332372 )
( 0.750, 0.7572982 )
( 0.875, 0.8786873 )
(1.000, 1.0000000 )
    };
 \addlegendentry{2020}
 \addplot[
    color=gray
        ]
    coordinates {
(0.000, 0.0000000)
(1, 1.0000000)
    };
\end{axis}
\end{tikzpicture}
\begin{tikzpicture}[scale=0.55]
\begin{axis}[
    xlabel={$u (n_R+1)/n_R$},
    ylabel={$\hat{\mathcal{L}}_{\Xb\pm}(u)$ [$X_2$ component]},
    xmin=0, xmax=1,
    ymin=0, ymax=1,
    legend pos=north west,
    unit vector ratio*=1 1 
]
\addplot[
    color=blue
        ]
    coordinates {
( 0.000 , 0.00000000)
( 0.125 , 0.08083609)
( 0.250 , 0.17095984)
( 0.375 , 0.27025293)
( 0.500 , 0.38250466)
( 0.625 , 0.50827747)
( 0.750 , 0.64413090)
( 0.875 , 0.80917650)
( 1.000 , 1.00000000)
    };
 \addlegendentry{1990}
\addplot[
    color=red
        ]
    coordinates {
(0.000 , 0.00000000)
(0.125 , 0.08475008)
(0.250 , 0.18034641)
(0.375 , 0.28945454)
( 0.500 , 0.40978241)
( 0.625 , 0.53922015)
( 0.750 , 0.67623918)
( 0.875 , 0.82455235)
(1.000 , 1.00000000)
    };
 \addlegendentry{2020}
 \addplot[
    color=gray
        ]
    coordinates {
(0.000, 0.0000000)
(1, 1.0000000)
    };
\end{axis}
\end{tikzpicture}
\caption{\label{fig: LorIEComp} \slshape\small Left: Center-outward multivariate (relative) Lorenz curve estimated at two different time points (1990 (blue)  and 2020 (red)).    Center: First component of that curve, which corresponds to the Education Index. Right: Second component of that curve, which corresponds to the Income Index.}
\end{figure}
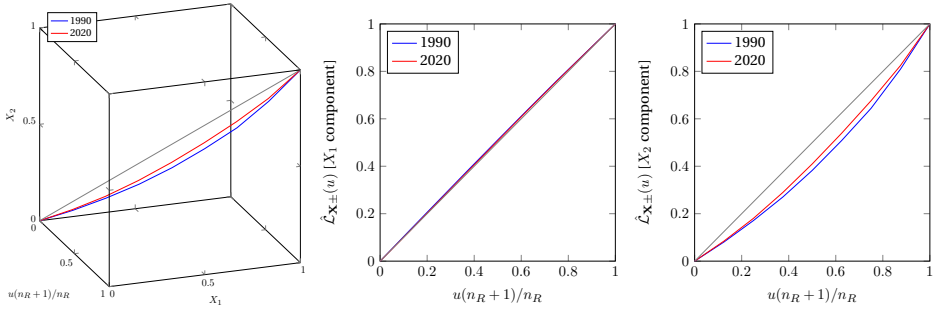

The results are shown in Figure~\ref{fig: LorIEComp}, where we provide  plots of ${\hat L}\n_{\Xb\pm}$ ($\Xb=(X_1, X_2)^\prime$\linebreak  with~$X_1=$Education and $X_2=$Income) for the years 1990 and~2019. The impact of Income clearly is highr than that of Education. These plots also indicate that the curves $\hat{\mathcal{L}}_{\Xb\pm}$,  between 1990 and 2020, have evolved towards the main diagonal of the unit cube, meaning that the empirical mean of $\Xb$ conditionally on being in a central quantile region of level~$u$ is getting closer to the overall empirical mean---less concentration, thus, in 2020 than in~1990. 

\begin{figure}[h!]
\begin{tikzpicture}[scale=0.8]
\begin{axis}[
    xlabel={$u (n_R+1)/n_R$},
    ylabel={ Kakwani function ${\hat{ \mathcal K}}^{\text{\tiny cond}}_{Y/\Xb\pm}(u)$},
    xmin=0, xmax=1,
    ymin=0.99, ymax=1.1,
    legend pos=north east
]
\addplot[
    color=blue
        ]      
   coordinates {
(0.111/0.888, 1.048125)
(0.222/0.888, 1.036622)
(0.333/0.888, 1.027369)
(0.444/0.888, 1.030250)
(0.555/0.888, 1.020707)
(0.666/0.888, 1.011598)
(0.777/0.888, 1.007211)
(0.888/0.888, 1.0000000)
    };
 \addlegendentry{ 1990}
 \addplot[
    color=darkraspberry
        ]
    coordinates {
(0.111/0.888, 1.060781)
(0.222/0.888, 1.040330)
(0.333/0.888, 1.030923)
(0.444/0.888, 1.034770)
(0.555/0.888, 1.021763)
(0.666/0.888, 1.012155)
(0.777/0.888, 1.006597)
(0.888/0.888, 1.0000000)
    };
 \addlegendentry{2000}
 \addplot[
    color=bulgarianrose,
        ]
    coordinates {
(0.111/0.888, 1.022984)
(0.222/0.888, 1.027072)
(0.333/0.888, 1.025151)
(0.444/0.888, 1.013048)
(0.555/0.888, 1.011172)
(0.666/0.888, 1.008161)
(0.777/0.888, 1.002705)
(0.888/0.888, 1.0000000)
    };
 \addlegendentry{2010}
\addplot[
    color=red
        ]
    coordinates {
(0.111/0.888, 1.018290)
(0.222/0.888, 1.010726)
(0.333/0.888, 1.011753)
(0.444/0.888, 1.014523)
(0.555/0.888, 1.008636)
(0.666/0.888, 1.006044)
(0.777/0.888, 1.001592)
(0.888/0.888, 1.0000000)
    };
 \addlegendentry{2020}
 \addplot[
    color=gray
        ]
    coordinates {
(0, 1)
(0.888/0.888, 1.0000000)
    };
\end{axis}
\end{tikzpicture}
\caption{\label{fig: Kak} \slshape\small  The empirical conditional relative Kakwani function  ${\hat{\mathcal K}}^{\text{\tiny cond}}_{Y/\Xb\pm}$  of $Y$=Health conditional on $\Xb$=(Education, Income).   The reference line (corresponding to the case under which  all values of~$u$ yield the same conditional  relative expectation value one)  is in gray. As   $n_0=0$ is zero, the   the curve is defined for $1/n_R \leq u\leq 1$. }
\end{figure}
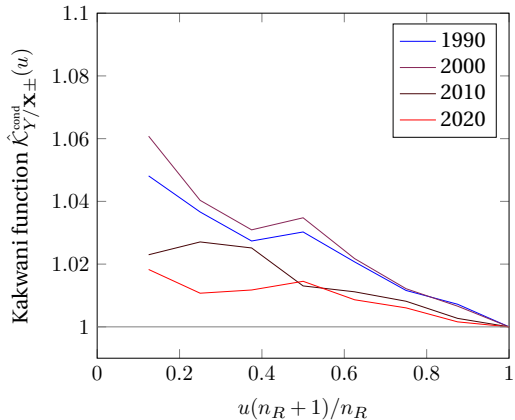

Figure~\ref{fig: Kak} yields, for the period 1990 - 2020, the graphs  of the empirical conditional relative Kakwani functions  ${\hat{\mathcal K}}^{\text{\tiny cond}}_{Y/\Xb\pm}$  of $Y=$Health conditional on $\Xb =$(Education, Income),  
which  represents, as a function of $u\in [0,1]$,  the relative ``expected health'' for a ``middle-class of dimension $u$'' characterized  in terms of national Education and Income indices.  
%
The results show, for instance,  that, conditionally on the fact that a country belonged to the~20\%  center-outward quantile region for Education and Income, its  expected health score in~1990 was much higher than for the countries in the  70\%  center-outward  quantile region. This was much less the case in 2020, though, and the evolution between 1990 and 2020, again, clearly indicates that  the discrepancies between ``middle-class" and ``outlying" (in terms of   Education and Income) countries  significantly  reduced over the past thirty-year period. 

\begin{table}[ht!]

\begin{tabular}{@{}lrrrr@{}@{}}
\hline
Year 
& \multicolumn{1}{c}{$G_{\Xb\pm}$}
& \multicolumn{1}{c}{$P_{\Xb\pm}$}
& \multicolumn{1}{c}{$G\!K_{Y/\Xb\pm}$}
& \multicolumn{1}{c}{$P\!K_{Y/\Xb\pm}$} \\
\hline
$1990$    & 0.1140 & 0.0834    & 0.0568   & 0.0134     \\
$2000$    & 0.1274 & 0.0977    &  0.0567  & 0.0155     \\
$2010$    & 0.1093 & 0.0757    &  0.0558  & 0.0084     \\
$2020$    & 0.1000 & 0.0640    &  0.0559   & 0.0065     \\
\hline
\end{tabular}
\caption{ \label{tab: coefs} \small\it  Empirical Gini and Pietra coefficients associated with the empirical Lorenz and Kakwani functions shown in Figures~\ref{fig: LorIEComp}  and~\ref{fig: Kak}.}
\end{table}
\vspace{-4mm}
Table~\ref{tab: coefs} further provides the empirical Gini and Pietra coefficients $G_{\Xb\pm}$, $P_{\Xb\pm}$, $G\!K_{Y/\Xb\pm}$, and $P\!K_{Y/\Xb\pm}$ associated with the Lorenz and Kakwani functions shown in Figures~\ref{fig: LorIEComp}  and~\ref{fig: Kak}; their variations between 1990 and 2020   are in line with the evolution of the  Lorenz and Kakwani 
functions they are summarizing.


\subsubsection{Center-outward Lorenz potential functions}
Next, we propose graphs of the Lorenz potentials defined in Equation~\eqref{eq: coDefLCPot} and estimated as in Section~\ref{sec: MultLoEst}.
As in Figure~\ref{fig: visualisation},  these potentials are for the variable $\Xb$ consisting of the Income  and  Education Indices of~144 countries in the year~2015. 
The potential increase was  steeper  in 1990 than it is in~2020, indicating that the spread of  $\Xb$ has decreased between~1990 and~2020.  
This   complements the  information provided by the Lorenz  curves of Figure~\ref{fig: LorIEComp}.

\begin{figure}[h!]
\begin{tikzpicture}[scale=0.75]
\begin{axis}[
    xlabel={$u$},
    ylabel={$\hat{\Lambda}_{\Xb\pm}$},
    xmin=0, xmax=1,
    ymin=0, ymax=0.45,
    legend pos=north west
]
\addplot[
    color=blue
        ]      
    coordinates {
    (0,0)
(0.111, 0.001738945)
(0.222, 0.011513719)
(0.333, 0.033282306)
(0.444, 0.071041953)
(0.555, 0.127301483)
(0.666, 0.204823307)
(0.777, 0.307430833)
(0.888, 0.441373439)
    };
 \addlegendentry{ 1990}
\addplot[
    color=red
        ]
    coordinates {
    (0,0)
(0.111, 0.0007754175)
(0.222, 0.0075034240)
(0.333, 0.0248323975)
(0.444, 0.0559973943)
(0.555, 0.1037853253)
(0.666, 0.1708010556)
(0.777, 0.2598894565)
(0.888, 0.3730399769)
    };
 \addlegendentry{2020}
\end{axis}
\end{tikzpicture}
\caption{\label{fig: LorPot} \slshape\small The (estimated) Lorenz potential functions associated with the joint distributions of the Education and  Income  Indices in 1990 and 2020, respectively.}
\end{figure}
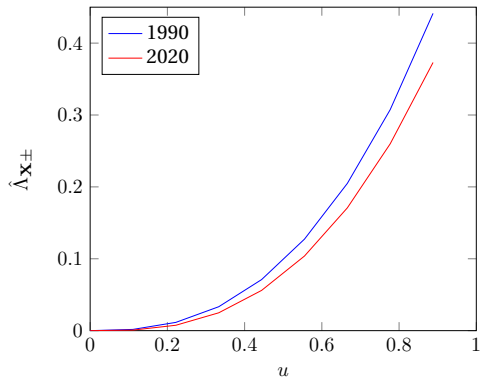


\subsection{The 2014 Spanish Health Survey}
\label{sec: App2}

\subsubsection{The dataset}
We further applied some of our concepts to the results of the European Health Survey for Spain provided by the
\citet{INE}. 
This dataset contains information about  individuals' state of health in Spain in~2014. The data is gathered from a survey that took place in a framework set by the European Statistical System. We refer to the documentation on the website of the Instituto Nacional de Estad\'istica for more information about the data collection process and the European Health survey context.

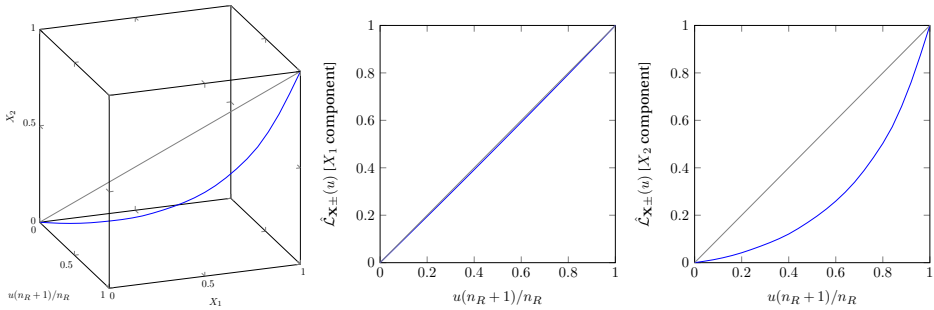
\begin{figure}[h!]
\begin{tikzpicture}[scale=0.38]
\begin{axis} [
view = {70}{20},
legend pos=north west, 
 xmin=0, xmax=1,
    ymin=0, ymax=1,
        zmin=0, zmax=1,
xtick={0,0.5,1},
ytick={0,0.5,1},
ztick={0,0.5,1},
xlabel={$u (n_R+1)/n_R$},
ylabel={$X_1$},
zlabel={$X_2$},
scale=2,
3d box= complete*,
unit vector ratio*=2 2 2
]
 
\addplot3[color= blue] table 
{
x y   z
0.00 0.00000000 0.000000000
0.04 0.03878755 0.005883336
0.08 0.07762285 0.012385091
0.12 0.11660140 0.020491392
0.16 0.15572322 0.030168812
0.20 0.19483310 0.041584489
0.24 0.23396686 0.054955708
0.28 0.27327969 0.069162627
0.32 0.31269997 0.084355674
0.36 0.35220381 0.101470834
0.40 0.39192254 0.120808959
0.44 0.43171291 0.143907739
0.48 0.47153909 0.169028915
0.52 0.51146077 0.196022063
0.56 0.55154959 0.225572455
0.60 0.59161454 0.258482367
0.64 0.63176305 0.295871636
0.68 0.67209064 0.338743105
0.72 0.71262117 0.388099616
0.76 0.75328303 0.443005181
0.80 0.79398071 0.503743941
0.84 0.83490521 0.574143406
0.88 0.87611623 0.660688618
0.92 0.91720787 0.761089754
0.96 0.95837114 0.876349657
1.00 1.00000000 1.000000000
};
 
\addplot3[color=gray]
 table 
{
x y   z
0.000 0.0000000 0.0000000
1 1.0000000 1.0000000
};
\end{axis}
\end{tikzpicture}
\begin{tikzpicture}[scale=0.55]
\begin{axis}[
    ylabel={$\hat{\mathcal{L}}_{\Xb\pm}(u)$  [$X_1$ component]},
    xlabel={$u(n_R+1)/n_R$},
    xmin=0, xmax=1,
    ymin=0, ymax=1,
    legend pos=north west,
        unit vector ratio*=1 1 
]
\addplot[
    color=blue
        ]
    coordinates {
(0.000, 0.0000000)
(0.040, 0.03880283)
(0.080, 0.07765344)
(0.120, 0.11665930)
(0.160, 0.15574877)
(0.200, 0.19483823)
(0.240 , 0.23396353)
(0.280 , 0.27329185)
(0.320 , 0.31271572)
(0.360 , 0.35222319)
(0.400 , 0.39189786)
(0.440 , 0.43170391)
(0.480 , 0.47152190)
(0.520 , 0.51144737)
(0.560 , 0.55152811)
(0.600 , 0.59163273)
(0.640 , 0.63179706)
(0.680 , 0.67216443)
(0.720 , 0.71274677)
(0.760 , 0.75343660)
(0.800 , 0.79405477)
(0.840 , 0.83493569)
(0.880 , 0.87612712)
(0.920 , 0.91718718)
(0.960 , 0.95834279)
(1 , 1)
    };
\addplot[
    color=gray
        ]
    coordinates {
(0.000, 0.0000000)
(1, 1)
    };
\end{axis}
\end{tikzpicture}
\begin{tikzpicture}[scale=0.55]
\begin{axis}[
    xlabel={$u(n_R+1)/n_R$},
    ylabel={$\hat{\mathcal{L}}_{\Xb\pm}(u)$ [$X_2$ component]},
    xmin=0, xmax=1,
    ymin=0, ymax=1,
    legend pos=north west, 
    unit vector ratio*=1 1 
]
\addplot[
    color=blue
        ]
    coordinates {
(0.000, 0.0000000)
(0.040, 0.005904059)
(0.080, 0.012428715)
(0.120, 0.020630661)
(0.160, 0.030342167)
(0.200, 0.041781281)
(0.240, 0.055031869)
(0.280, 0.069339148)
(0.320, 0.084619255)
(0.360, 0.101459242)
(0.400, 0.120748071)
(0.440, 0.143760483)
(0.480, 0.169104327)
(0.520, 0.196242872)
(0.560, 0.226182489)
(0.600, 0.259476686)
(0.640, 0.296863469)
(0.680, 0.339332439)
(0.720, 0.388393157)
(0.760, 0.443072794)
(0.800, 0.502767528)
(0.840, 0.572425361)
(0.880, 0.658990272)
(0.920, 0.759862462)
(0.960, 0.875679302)
(1, 1)
    };
\addplot[
    color=gray
        ]
    coordinates {
(0.000, 0.0000000)
(1, 1)
    };
\end{axis}
\end{tikzpicture}
\caption{\label{fig: LorAgeAbsen} \slshape\small  Left: the center-outward multivariate Lorenz curve for the bivariate distribution of~$X_1$=Age and $X_2$=Work absenteeism. Center: the first component of that Lorenz curve, which correspond to  $X_1$. Right: the second component, which  corresponds to $X_2$. In gray, the reference line which is the diagonal of the unit cube or square.}
\end{figure}

Motivated by socio-economic applications, we consider  work absenteeism (measured in days over a one-year period) and  its relation to other characteristics of the individuals. Our aim with this dataset is to show that it is possible to go beyond the bivariate examples presented in the previous section and exhibit various curve patterns.

We start by exhibiting the Lorenz center-outward curve for $\Xb = (X_1,X_2)$ where $X_1$ is   rescaled\footnote{Rescaling is performed by dividing each observation by its largest observed value, thus yielding a [0-1] range and  ensuring some balance between the components involved  in the matching problem that leads to the center-outward ranks and signs.} age and $X_2$   rescaled$^7$ work absenteeism. 
The result are shown in Figure~\ref{fig: LorAgeAbsen}. The Lorenz curve (left panel) indicates a significant concentration; that concentration, however, is due to absenteeism (as shown by the center panel) rather than age (right panel). 

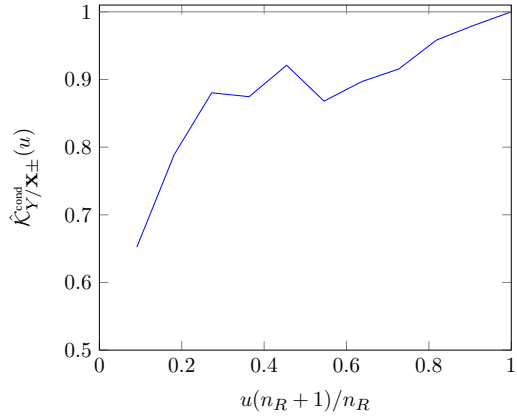
\begin{figure}[t!]
\begin{tikzpicture}[scale=0.8]
\begin{axis}[
    xlabel={$u(n_R+1)/n_R$},
    ylabel={ ${\hat{ \mathcal K}}^{\text{\tiny cond}}_{Y/\Xb\pm}(u)$ },
    xmin=0, xmax=1,
    ymin=0.5, ymax=1.01,
    legend pos=north east
]
\addplot[
    color=blue
        ]      
    coordinates {
( 0.08333333/0.91666667,  0.6522463) 
( 0.16666667/0.91666667, 0.7891015 )
( 0.25000000/0.91666667, 0.8803383 )
( 0.33333333/0.91666667, 0.8745840) 
( 0.41666667/0.91666667, 0.9210483 )
( 0.50000000/0.91666667, 0.8679978 )
( 0.58333333/0.91666667, 0.8967198 )
( 0.66666667/0.91666667, 0.9156094 )
( 0.75000000/0.91666667, 0.9581716 )
( 0.83333333/0.91666667, 0.9800333 )
( 0.91666667/0.91666667, 1.0000000)
    };
\addplot[
    color=gray
        ]
    coordinates {
( 0,  1) 
(1, 1.0000000)
    };
\end{axis}
\end{tikzpicture}
\caption{\label{fig: KakS1} \slshape\small  Relative conditional Kakwani function (multiplied by the number  of observations in the corresponding quantile region) of $Y$ =  Absenteeism   conditional on  $\Xb$ =  (Age, Weight, Alcohol Consumption). In gray, the reference line.}
\end{figure}


\begin{figure}[t!]
\begin{tikzpicture}[scale=0.8]
\begin{axis}[
    xlabel={$u(n_R+1)/n_R$},
    ylabel={ ${\hat{ \mathcal K}}^{\text{\tiny cond}}_{Y/\Xb\pm}(u)$ },
    xmin=0, xmax=1,
    ymin=0.5, ymax=1.01,
    legend pos=north east
]
\addplot[
    color=blue
        ]      
    coordinates {
( 0.1428571/0.8571429,  0.8366911) 
( 0.2857143/0.8571429, 0.8729298 )
( 0.4285714/0.8571429, 0.9078026 )
( 0.5714286/0.8571429, 0.9659595) 
( 0.7142857/0.8571429, 0.9429230 )
( 0.8571429/0.8571429, 1.0000000 )

    };
\addplot[
    color=gray
        ]
    coordinates {
( 0,  1) 
(1, 1.0000000)
    };
\end{axis}
\end{tikzpicture}
\caption{\label{fig: KakS2} \slshape\small  Relative Kakwani function  (multiplied by the number  of observations in the corresponding quantile region) of $Y$ =  Absenteeism   conditional on  $\Xb$ =  (Age, Weight, Alcohol Consumption, Sports Practice). In gray, the reference line.}
\end{figure}
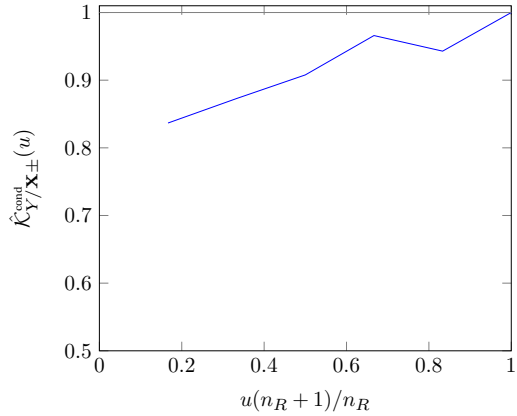

More information is provided by incorporating more variables and computing, e.g., (estimated)  conditional relative Kakwani functions (as defined  in Definition~\ref{c-oLorCo}). Figure~\ref{fig: KakS1}   provides plots of these conditional Kakwani functions for $Y$= Work Absenteeism and~$\mathbf{X} =(X_1,X_2, X_3)$ with $X_1$ = rescaled Age,  $X_2$ = rescaled Weight,  and $X_3$ = rescaled  Alcohol Consumption.  
Figure~\ref{fig: KakS2} adds  to $\Xb$ a fourth component $X_4$ = Sports Practice, the rescaled time spent on practicing sports.   The estimators are those proposed in Section~\ref{sec: MultLoEst}. 
Figure~\ref{fig: KakS1} thus describes, as a function of $u\in [0,1]$, the expected Absenteeism for individuals belonging to middle-class regions of order $u$, where the construction of these middle classes is based on  Age, Weight, and Alcohol Consumption.  For the sake of readability, the value at $u$ of the Kakwani function  has been multiplied by the number $n/(n_0+n_S \lfloor (n_R+1)  u \rfloor)\approx u^{-1}$ of observations in the quantile region of order $u$. Figure~\ref{fig: KakS2} has the same interpretation, now with $\Xb$ defining a four-dimensional ``middle class."

Both Figures~\ref{fig: KakS1} and~ \ref{fig: KakS2} indicate a concentration of Absenteeism   on ``extreme''  (in terms of Age, Weight,  Alcohol Consumption, and Sports Practice) observations, with a relatively small contribution of the corresponding central values.

\bibliographystyle{ecta-fullname} 
\bibliography{Gini}  


\end{document}